\newcommand{\R}{\mathbb{R}}
\newtheorem{theorem}{Theorem}[section]
\newtheorem{lemma}[theorem]{Lemma}
\title{A method for bounding oscillatory integrals in terms of non-oscillatory integrals}
\author{Michael Greenblatt}
\date{\today}
\newcommand\blfootnote[1]{%
  \begingroup
  \renewcommand\thefootnote{}\footnote{#1}%
  \addtocounter{footnote}{-1}%
  \endgroup
}
\begin{document}
\maketitle
\begin{abstract} 

We describe an elementary method for bounding a one-dimensional oscillatory integral in terms of an associated non-oscillatory integral. The bounds obtained
are efficient in an appropriate sense and behave well under perturbations of the phase. As a consequence, for an $n$-dimensional oscillatory integral with a critical point at the origin, 
we may apply the one-dimensional
estimates in the radial direction and then integrate the result, thereby obtaining
natural bounds for the $n$-dimensional oscillatory integral in terms of the measures of the sublevel sets associated with the phase. To illustrate, we provide several classes of examples,
 including situations where the phase function has a critical point at which it vanishes to infinite order. 

\end{abstract}
\blfootnote{This work was supported by a grant from the Simons Foundation.}

\section{Background and theorem statements.}

\subsection{General discussion and the main theorems.}

In a number of settings in analysis, one considers  oscillatory integrals of the form
\[I(\lambda) = \int  e^{i\lambda f(x_1,...,x_n)} \phi(x_1,...,x_n)\,dx_1\,...\,dx_n\tag{1.1}\]
Here,  $\phi(x_1,...,x_n)$ is a compactly supported real-valued $C^1$ function defined on a neighborhood of the origin and 
$f(x_1,...,x_n)$ is at least a $C^2$ real-valued function on an open set containing the support of $\phi$. The goal here is to find estimates of the form $|I(\lambda)| \leq g(|\lambda|)$, where
$g$ is an appropriately quickly decreasing function. To avoid trivialities, one normally assumes that $f$ is nonconstant and $\nabla f(0) = 0$. 

A canonical example of where oscillatory integrals $(1.1)$ show up is in the analysis of Fourier transforms of surface measures. If one is looking at the Fourier transform in a specific direction of a smooth surface
measure for a surface $S$, then after a rotation so that the direction becomes $(0,...,0,1)$, the resulting oscillatory integral is locally exactly of the form $(1.1)$. Here
 $f(x)$ is the function for which $S$ is the graph of $f(x)$. Since surface measure Fourier transforms have applications to a wide variety of subjects including maximal averages, restriction
problems, lattice point discrepancy, and more, improved understanding of oscillatory integrals $(1.1)$  leads to developments in those subjects.

One often estimates a one-dimensional oscillatory integral by applying the method of stationary phase or its consequences, for example
 using the Van der Corput lemma. The Van der Corput 
lemma alone may not give optimal estimates when averaging over the remaining $n-1$ variables. A more refined use of stationary phase may improve 
things, but such methods can give endpoint terms and sublevel set measures  that may be hard to average effectively over these remaining variables. 

In this paper, we will give a theorem (Theorem 1.1) that bounds a one-dimensional oscillatory integral in terms of a single non-oscillatory integral, combined with a single endpoint term.
This can be used in the oscillatory integral $(1.1)$ by integrating in the radial direction, applying Theorem 1.1, and then integrating the result over the unit sphere. This "oscillatory 
integral method of rotations" 
frequently bounds the oscillatory integral by associated sublevel set measures in a natural way, as we will see in section 3, as the maximal measure of a the set where the phase function 
is within a single period. 

To prove the uniformity of the relevant constants in Theorem 1.1 over the unit sphere for real analytic $f(x)$, we
 will make use of a result (Theorem 2.1) providing uniform bounds above and below on parallel lines for real analytic functions and/or its derivatives.
This will enable us to average Theorem 1.1 in the above manner. This result leads to some corollaries (Corollaries 
2.1.1-2.1.3) concerning the behavior of real analytic functions on parallel lines which may be of interest to the reader in their own right. Theorem 2.1 will be proven in section 6.

We also prove a theorem (Theorem 1.2) with weaker hypotheses which gives a similar statement to Theorem 1.1 except it includes some endpoint terms
that are not stable under general perturbations of the phase. Nonetheless, this result provides useful estimates for various classes of phase functions. Similarly to the 
real analytic case, in section 3 we will use this result in the radial direction and then integrate over the sphere to bound oscillatory integrals by sublevel set measures in a natural way,
 this time for classes of phases with zeroes of infinite order at the origin. 

In section 4, we will describe a few ways in which Theorems 1.1 and 1.2 can be viewed as efficient, and in section 5 we will provide the proofs of Theorems 1.1 and 1.2.

We should point out there have been various papers focusing on scalar oscillatory integrals of the type considered here. We mention
 [BaGuZhZo] [CaCWr] [G] [Gi] [Gr] [PhStS] [V] as some examples.

Throughout this paper, for a $C^k$ function $f(x)$ on a closed interval $[a,b]$, for $p \leq k$ we will write $f^{(p)}(x)$ to denote the standard
 $p$th derivative if $x \in (a,b)$, and the appropriate left or right-hand $p$th derivative if $x$ is an endpoint $a$ or $b$.

To give an idea of what we will be doing, we consider the simple one dimensional integral $\int_a^b e^{i f(x)}\,dx$ for $f \in C^2([a,b])$.
Let $[c,d] \subset [a,b]$ such that  $f'(x) \neq 0$ for $x \in [c,d]$. We consider the portion of the
 integral over $[c,d]$. We apply the standard integration by parts, namely writing $ e^{i f(x)} = if'(x)e^{i f(x)} 
\times {1 \over if'(x)}$ and integrating by parts, integrating $ i f'(x)e^{i f(x)}$ and differentiating ${1 \over i f'(x)}$. 
We obtain
\[\int_c^d e^{i f(x)}\,dx = {e^{if(d)} \over i f'(d)} -  {e^{if(c)} \over i f'(c)}  + {1 \over i} \int_c^d  e^{i f(x)} { f''(x) \over  (f'(x))^2}\,dx \tag{1.2}\]
In view of the form of $(1.2)$, one might consider the integral on the right hand side of $(1.2)$ to be an improvement over the one on the left if 
$\big|{f''(x) \over (f'(x))^2}\big| < 1$ on $(c,d)$. This suggests the possibility of dividing the overall integral into two terms. The first term is the
 integral over those $x$ for which  $\big| { f''(x) \over  (f'(x))^2}\big| \geq 1$ (or where $f'(x) = 0$), a domain on which
no integration by parts is needed. The domain of the  second term  is where $\big| { f''(x) \over  (f'(x))^2}\big|  < 1$. This
 domain is a union of intervals on each of which we wish to integrate by parts as in $(1.2)$.

We will see that as long as some $k$th derivative $f^{(k)}$ is continuous and nonzero on $[a,b]$, the endpoint terms in the integrations by parts of 
$(1.2)$ are,
generally speaking, of no greater order of magnitude than the integral term. There is one exception, a term of magnitude $O( {1 \over \sup_{[a,b]}|f'(x)|})$
that may be incurred in the overall sum. The precise theorem is as follows.
\begin{theorem} 
Suppose $k$ is a positive integer and $f$ is a $\max(k,2)$ times continuously differentiable real-valued function on an interval $[a,b]$ such that there is a constant $A$ with
$\sup_{[a,b]} |f^{(k)}(x)| < A \inf_{[a,b]} |f^{(k)}(x)|$ $($note that this implies that $f^{(k)}(x)$ is nonzero on $[a,b])$. Suppose $\phi(x)$ is a $C^1$ function on $[a,b]$. Then we have the following.

\begin{itemize}
\item If $k \geq 2$ then there is a constant $B_{A,k} > 0$ such that 
\[\bigg|\int_a^b e^{i f(x)}\phi(x)\,dx\bigg| \leq \]
\[ B_{A,k} (||\phi||_{L^{\infty}} + ||\phi'||_{L^1})\bigg(\int_a^b\min\bigg(1, \bigg|{f''(x) \over (f'(x))^2}\bigg|\bigg) dx + \min\big( b - a, {1 \over \sup_{[a,b]}|f'(x)|}\big) \bigg)
 \tag{1.3}\]
\item If $k = 1$ and $[a,b]$ is the union of $j$ intervals on which $f'(x)$ is monotonic, then $(1.3)$ holds with $B_{A,1}$ replaced by $jB_{A,1}$.
\end{itemize}
\end{theorem}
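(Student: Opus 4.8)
The plan is to reduce to the case $\phi \equiv 1$ and then run the localized integration by parts suggested by $(1.4)$. If $\tfrac{1}{\sup_{[a,b]}|f'|} \geq b-a$ then $(1.5)$ is immediate from $\big|\int_a^b e^{if}\phi\big| \leq (b-a)\|\phi\|_{L^\infty}$, so assume the endpoint term equals $\tfrac{1}{\sup_{[a,b]}|f'|}$. Write $N(x) = \min\big(1, |f''(x)/(f'(x))^2|\big)$, $v(x) = 1/f'(x)$ on $\{f' \neq 0\}$, and split $[a,b]$ into $E_1 = \{f'=0\} \cup \{|f''/(f')^2| \geq 1\}$ and its complement $E_2$, an at most countable union of open intervals $I_i = (c_i, d_i)$ on each of which $f' \neq 0$, $N = |f''/(f')^2|$, and $|v'| < 1$ (so $v$ is $1$-Lipschitz on each $I_i$). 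The hypothesis forces $f^{(k)}$ to have constant sign: for $k \geq 2$ this gives $f'$ at most $k-1$ zeros and $f''$ at most $k-2$ zeros; for $k = 1$ it gives $f' \neq 0$ everywhere and $\tfrac{1}{|f'(x)|} < \tfrac{A}{\sup_{[a,b]}|f'|}$ for all $x$. Finally, writing $\Phi(x) = \int_a^x e^{if(t)}\,dt$ and $\int_a^b e^{if}\phi = \phi(b)\Phi(b) - \int_a^b \Phi\phi'$, it suffices to prove $\big|\int_c^d e^{if(t)}\,dt\big| \leq B'\big(\int_a^b N + \tfrac{1}{\sup_{[a,b]}|f'|}\big)$ for every subinterval $[c,d] \subseteq [a,b]$ --- with $B' = jB'_{A,1}$ when $k = 1$ --- since the factors $\|\phi\|_{L^\infty}$, $\|\phi'\|_{L^1}$ then appear exactly as in $(1.5)$.

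Apply the $E_1/E_2$ decomposition to $[c,d]$. On the $E_1$-part the trivial bound gives $\big|\int_{E_1 \cap [c,d]} e^{if}\big| \leq |E_1 \cap [c,d]| = \int_{E_1 \cap [c,d]} N$, since $\{f'=0\}$ is finite and $N \equiv 1$ on the rest of $E_1$. On each $I_i$, integration by parts as in $(1.4)$ gives $\int_{c_i}^{d_i} e^{if} = \big[\tfrac{e^{if}}{if'}\big]_{c_i}^{d_i} - \tfrac1i\int_{c_i}^{d_i} e^{if}\,\tfrac{f''}{(f')^2}$, whose remainder integrals sum to at most $\sum_i \int_{I_i} N \leq \int_a^b N$. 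A Taylor expansion at a zero $x_0$ of $f'$, with $f^{(m)}(x_0)$ the first nonvanishing derivative ($2 \leq m \leq k$), shows $|f''/(f')^2| \sim c_m|f^{(m)}(x_0)|^{-1}|x-x_0|^{-m} \to \infty$; hence a whole neighborhood of every zero of $f'$ lies in $E_1$, no $I_i$ touches a zero of $f'$, every endpoint of an $I_i$ interior to $[a,b]$ satisfies $(f')^2 = |f''|$ (so $|v'| = 1$ there), and --- this is a small but essential estimate powered by the $f^{(k)}$-hypothesis --- whenever $1/|f'|$ is large at such a point, $\int N$ over a neighborhood is comparably large.

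The heart of the proof is the endpoint sum $\sum_i \big[\tfrac{e^{if}}{if'}\big]_{c_i}^{d_i}$, which need not converge absolutely (infinitely many $I_i$ can accumulate at a point where $|v'| = 1$), so cancellation is essential. Partition $[c,d]$ into finitely many intervals $P$ on each of which $v = 1/f'$ is monotone --- for $k \geq 2$ cut at the zeros of $f'$ and of $f''$, giving $O_k(1)$ pieces; for $k=1$ use the given $j$ monotonicity intervals of $f'$ (hence the factor $j$). On a fixed $P$ the $I_i \subseteq P$ form a chain $c_1 < d_1 \leq c_2 < d_2 \leq \cdots$ with each gap $[d_i, c_{i+1}] \subseteq E_1$, and telescoping gives
\[\sum_i \Big[\frac{e^{if}}{if'}\Big]_{c_i}^{d_i} = \lim_i \frac{e^{if(d_i)}}{if'(d_i)} - \frac{e^{if(c_1)}}{if'(c_1)} - \sum_i \int_{d_i}^{c_{i+1}} \Big(e^{if} - \frac{e^{if}f''}{i(f')^2}\Big)dx .\]
Over a gap $\big|\tfrac{d}{dx}\tfrac{e^{if}}{if'}\big| \leq 1 + |v'|$; since $v$ is monotone there, $\int_{\mathrm{gap}} |v'| = |v(d_i) - v(c_{i+1})|$, and summing, these differences telescope (monotonicity again) to at most $|v(c_1)|$ while the gap lengths add to at most $|E_1 \cap P| \leq \int_P N$. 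The two surviving boundary terms have modulus $\leq |v(c_1)|$ (the chain's right extreme gives $\leq |v|$ there, which is $\leq |v(c_1)|$ by monotonicity); and that left extreme $c_1$ is either an endpoint of $[c,d]$ --- absorbed, via the $k=1$ bound $\tfrac{1}{|f'|}<\tfrac{A}{\sup_{[a,b]}|f'|}$ or via the estimate of the previous paragraph, into $\tfrac{1}{\sup_{[a,b]}|f'|}$ or into $\int_a^b N$, the former being the single exceptional term of $(1.5)$ --- or it sits at a zero of $f'$, absorbed into $\int_a^b N$ as above, or it is shared with the neighboring piece at a point of $E_2$, where it cancels the matching term. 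Summing over the $O_k(1)$ (resp.\ $j$) pieces together with the $E_1$- and remainder-terms yields the asserted bound, with constant $B_{A,k}$ (resp.\ $jB_{A,1}$).

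The step I expect to be the main obstacle is precisely the last one: organizing the non-absolutely-convergent endpoint sum so that it collapses, and verifying that the only contribution surviving past $\int_a^b N$ is the single term $\tfrac{1}{\sup_{[a,b]}|f'|}$. This is where the full strength of $\sup_{[a,b]}|f^{(k)}| < A\inf_{[a,b]}|f^{(k)}|$ --- not merely the nonvanishing of $f^{(k)}$ --- must be used, both to bound $1/|f'|$ near points where $|f'|$ is small by nearby contributions to $\int N$, and to keep the number of monotonicity pieces bounded.
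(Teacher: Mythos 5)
Your opening moves coincide with the paper's: the split of the interval into $E_1=\{f'=0\}\cup\{|f''|/(f')^2\ge 1\}$ and the complementary open components, the trivial bound on $E_1$, and integration by parts on each component with the remainder integral absorbed into $\int_a^b N$. The gap is in how you dispose of the boundary values of $1/|f'|$ that survive your telescoping. Everything is funneled into the assertion, stated but never proved, that at a point $e$ with $f'(e)\neq 0$ and $|f''(e)|=(f'(e))^2$ (an $E_1/E_2$ interface, or more generally a chain extreme, including an endpoint of your auxiliary subinterval $[c,d]$) the quantity $1/|f'(e)|$ is dominated by $\int N$ over a neighborhood. As literally stated this is false: take $k=2$, $f''\equiv c$, $f'(x)=\sqrt{c}+cx$, and $[a,b]=[-\eta,\eta]$ with $\eta\ll c^{-1/2}$; then $e=0$ is an interior interface point, $1/|f'(0)|=c^{-1/2}$ is arbitrarily large, while $\int_a^b N\le 2\eta$ is arbitrarily small. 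Your initial reduction to the case $b-a\ge 1/\sup_{[a,b]}|f'|$ excludes that particular configuration, but then the statement you actually need is $1/|f'(e)|\le C_{A,k}\bigl(\int_a^b N+1/\sup_{[a,b]}|f'|\bigr)$ for every such $e$, and proving it requires propagating the hypothesis $\sup_{[a,b]}|f^{(k)}|<A\inf_{[a,b]}|f^{(k)}|$ down to quantitative control of $f''$ and $f'$ at the scale of the component containing $e$; this is exactly the technical heart of the theorem, and the phrase ``powered by the $f^{(k)}$-hypothesis'' is where the proof has to live. Relatedly, absorbing a chain extreme at an endpoint of $[c,d]$ ``via the $k=1$ bound $1/|f'|<A/\sup_{[a,b]}|f'|$'' is only legitimate when $k=1$; for $k\ge 2$ the derivative $f'$ may be tiny at that endpoint while $\sup_{[a,b]}|f'|$ is huge, so that inequality is unavailable and you are thrown back on the same unproven estimate.

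For comparison, the paper never has to cancel these terms because it never creates them. It integrates by parts only on components where $|f'|$ varies by more than a factor of $2$; there the endpoint terms are bounded by $3\int_{I_j}|f''|/(f')^2$ by a fundamental-theorem-of-calculus argument (Lemma 5.1), with no use of the $f^{(k)}$ hypothesis. On components where $|f'|$ varies by at most a factor of $2$ it uses no integration by parts at all: the interface condition $|f''(e)|=(f'(e))^2$ together with Lemma 1 of Phong--Stein applied to $f''$ forces $\int_{I_j}N\ge N_{A,k}|I_j|$, so the trivial bound $\|\phi\|_{L^\infty}|I_j|$ already suffices, and only the single such component equal to all of $(a,b)$ produces the lone $1/\sup_{[a,b]}|f'|$ term. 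If you wish to keep the telescoping scheme, you would still need an estimate of precisely this Phong--Stein type to control the surviving extreme values of $1/|f'|$, so the telescoping does not let you bypass the step you yourself identified as the main obstacle; as it stands, the proposal defers rather than closes it.
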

\noindent We will often use $(1.3)$ with $f(x)$ replaced by $\lambda f(x)$ for a real parameter $\lambda$. In this case $(1.3)$ becomes
\[\bigg|\int_a^b e^{i\lambda  f(x)}\phi(x)\,dx\bigg| \leq \]
\[B_{A,k} (||\phi||_{L^{\infty}} + ||\phi'||_{L^1}) \bigg(\int_a^b\min\bigg(1, \bigg|{f''(x) \over \lambda (f'(x))^2}\bigg|\bigg) dx + \min\big( b - a, {1 \over |\lambda| \sup_{[a,b]}|f'(x)|}\big) \bigg)
 \tag{1.4}\]

The condition that $\sup_{[a,b]} |f^{(k)}(x)| < A \inf_{[a,b]} |f^{(k)}(x)|$ means that $f(x)$ is of "polynomial type" in the sense of [PhSt], which enables
us to use properties of such functions (see Lemma 1 of Part 2 of [PhSt]) that are key in the proof of Theorem 1.1.

One might ask what happens if we "recreate" endpoint terms by integrating ${f''(x) \over \lambda (f'(x))^2}$ to 
$-{1 \over \lambda f'(x)}$ in $(1.4)$.
One sees that one gets terms of magnitude ${1 \over |\lambda f'(x)|}$ for $x = a$, $x = b$, any $x$ satisfying $f''(x) = 0$, or any $x$ satisfying 
$|\lambda| (f'(x))^2 = |f''(x)|$. This last condition can be restated as ${1 \over |\lambda f'(x)|} = {1 \over |\lambda|^{1 \over 2} |f''(x)|^{1 \over 2}}$.
Such endpoint terms are not easily seen to behave well under perturbations of $f(x)$, but might be of interest in individual integrals.

\subsection {A variant of Theorem 1.1 with weaker hypotheses.}

If one does not have the condition that $\sup_{[a,b]} |f^{(k)}(x)| < A \inf_{[a,b]} |f^{(k)}(x)|$ for some $A$ and some $k$ in Theorem 1.1, sometimes
one can simply divide $[a,b]$ into several intervals for which such a condition holds. But even if this is not the case, such as when $f(x)$ has a zero of
infinite order, one can still
obtain a statement resembling that of Theorem 1.1. But there are endpoint terms cropping up that may not behave well
under perturbations of the phase. The theorem is as follows.

\begin{theorem} Suppose $f$ is a $C^3$ function $[a,b]$ such that $\{x \in [a,b]: f'''(x) = 0\}$ is finite. Suppose $\phi(x)$ is a $C^1$ function on $[a,b]$.
Let $J$ denote the $($finite$)$ set of points $x$ in $[a,b]$ for which $f'(x) \neq 0$ and for which either $x = a$, $x = b$, $f''(x) = 0$, or $f'''(x) = 0$. 
Then there is a uniform constant $B$ such that 
\[\bigg|\int_a^b e^{i f(x)}\phi(x)\,dx\bigg| \leq \]
\[ B (||\phi||_{L^{\infty}} + ||\phi'||_{L^1})\bigg(\int_a^b\min\bigg(1, \bigg|{f''(x) \over (f'(x))^2}\bigg|\bigg) dx + \min\big( b - a, 
\sum_{x \in J} {1 \over |f'(x)|} \big) \bigg) \tag{1.5}\]
\end{theorem}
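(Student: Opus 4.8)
I plan to split $[a,b]$ at the finitely many zeros of $f'$, $f''$, $f'''$ together with $a,b$ and argue subinterval by subinterval. Finiteness is automatic: since $\{f'''=0\}$ is finite, $f''$ is strictly monotone between consecutive zeros of $f'''$, so $\{f''=0\}$ is finite, so $f'$ is strictly monotone between consecutive zeros of $f''$, so $\{f'=0\}$ is finite. On the interior of each resulting closed subinterval $[p,q]$, $f',f'',f'''$ are nonzero of constant sign, and a reflection $x\mapsto -x$ and/or a conjugation (both of which transform the two sides of $(1.7)$ compatibly, with $\phi$ replaced by $\bar\phi$ or $\phi(-x)$, which have the same $L^\infty$ and $L^1$ of the derivative) reduce matters to $f'>0$, $f''>0$, so that $g:=1/f'$ is positive, strictly decreasing, and $|g'|=f''/(f')^2=:h$. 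I aim to prove that on such a $[p,q]$,
\[
\Bigl|\int_p^q e^{if}\phi\,dx\Bigr|\ \le\ C\,(\|\phi\|_{L^\infty}+\|\phi'\|_{L^1})\Bigl(\int_p^q\min(1,h)\,dx+\sum_{\substack{x\in\{p,q\}\\ f'(x)\neq0}}\frac1{f'(x)}\Bigr),
\]
together with the trivial bound $\|\phi\|_{L^\infty}(q-p)$; summing over the finitely many subintervals, using $\sum_i\min(u_i,v_i)\le\min(\sum_i u_i,\sum_i v_i)$ and the fact that each interior partition point is shared by exactly two subintervals, then gives $(1.7)$ — the surviving endpoint terms being precisely those indexed by $J$, since a partition point contributes to the sum only when $f'\neq0$ there.

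On a fixed $[p,q]$ I would first discard $\{h\ge1\}$, whose contribution is at most $\|\phi\|_{L^\infty}|\{h\ge1\}|\le\|\phi\|_{L^\infty}\int_p^q\min(1,h)$. The complementary open set $G=\{h<1\}$ is a countable disjoint union of intervals $(c_j,d_j)$, and on each I would integrate by parts as in $(1.4)$, carrying $\phi$ along. The piece of the resulting interior integral containing $f''$ is bounded by $\|\phi\|_{L^\infty}\int_{c_j}^{d_j}h=\|\phi\|_{L^\infty}\int_{c_j}^{d_j}\min(1,h)$ and sums correctly; the endpoint terms are bounded by $\|\phi\|_{L^\infty}(g(c_j)+g(d_j))$; and the piece containing $\phi'$ is bounded by $\int_{c_j}^{d_j}g\,|\phi'|$. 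Since $|g'|=h<1$ on $(c_j,d_j)$, $g$ is $1$-Lipschitz there, so $2g(x)\le g(c_j)+g(d_j)+\int_{c_j}^{d_j}\min(1,h)$ for $x\in(c_j,d_j)$; thus once the endpoint values $g(c_j),g(d_j)$ are controlled, so are both the endpoint terms and the $\phi'$-term. The whole displayed subinterval estimate therefore reduces to the inequality $\sum_j(g(c_j)+g(d_j))\le C\bigl(\int_p^q\min(1,h)+g(p)+g(q)\bigr)$ (interpreting $g(p)$ or $g(q)$ as $0$ in their role when $f'$ vanishes there, after the peeling described below).

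For that inequality I would exploit that $g$ is monotone on all of $[p,q]$, so $g(p)\ge g(c_1)\ge g(d_1)\ge g(c_2)\ge\cdots\ge g(q)$; that over a good interval $g(c_j)-g(d_j)=\int_{c_j}^{d_j}h=\int_{c_j}^{d_j}\min(1,h)$ while over a bad interval lying between two components $g$ decreases at rate $\ge1$, so that $g(c_j)=g(q)+\int_{c_j}^q h$ and $\int_p^q(h-1)^+=g(p)-g(q)-\int_p^q\min(1,h)$; and a dichotomy in which, on any component where $g(c_j)+g(d_j)$ is large relative to $d_j-c_j$, I do not integrate by parts but estimate that piece simply by $\|\phi\|_{L^\infty}(d_j-c_j)$, these lengths being summable against $b-a$. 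The art is to organize the dichotomy so that the components on which one does integrate by parts charge their endpoint values $g(c_j),g(d_j)$, with bounded multiplicity, either to $\int\min(1,h)$ accumulated on an adjacent bad interval or to the fixed quantities $g(p),g(q)$; and, when $f'(p)=0$, to peel off first the maximal initial bad subinterval emanating from $p$ — nonempty because $h\to\infty$ at $p$ — before running the analysis on what remains.

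The step I expect to be the real obstacle is exactly this uniform control of $\sum_j(g(c_j)+g(d_j))$. Under only $C^3$ regularity with $\{f'''=0\}$ finite there is no bound on the number of times $h$ crosses the level $1$ (for instance with $f''$ positive, strictly decreasing, and very small over most of the interval), so $G$ may have arbitrarily — indeed infinitely — many components, and the bookkeeping must nonetheless show the total endpoint cost stays comparable to $\int\min(1,h)$ plus the finitely many boundary contributions. This is where the monotonicity of $f'$ and of $f''$ on each constant-sign interval, forced by the hypothesis on $f'''$, is decisive, and where the absence of the polynomial-type hypothesis of Theorem 1.1 — which in that stronger setting limits the number of components outright — is precisely what compels the weaker conclusion carrying the full set $J$.
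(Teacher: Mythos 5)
Your overall skeleton --- split $[a,b]$ at finitely many distinguished points, discard $\{|f''|/(f')^2\ge 1\}$ by the trivial bound, integrate by parts on the components $(c_j,d_j)$ of $\{|f''|/(f')^2<1\}$, and observe that only partition points with $f'\neq 0$ should survive as endpoint terms --- is indeed the shape of the paper's proof. But the step you yourself flag as ``the real obstacle'' is the actual heart of the matter, and the reduction you propose is not salvageable as stated. The inequality $\sum_j\bigl(g(c_j)+g(d_j)\bigr)\le C\bigl(\int_p^q\min(1,h)\,dx+g(p)+g(q)\bigr)$ is false in general: with $f',f'',f'''$ all positive, $h=f''/(f')^2$ can cross the level $1$ arbitrarily (even infinitely) many times while $g=1/f'$ stays nearly constant and large, so the left side is roughly $2g$ times the number of components while the right side stays bounded. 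Nor does your fallback of estimating the offending components by their lengths, ``summable against $b-a$,'' rescue this: the conclusion $(1.7)$ carries $\min\bigl(b-a,\sum_{x\in J}1/|f'(x)|\bigr)$, and an error of size comparable to the subinterval length ruins the $\sum_J 1/|f'|$ branch of that minimum when $\sum_J 1/|f'|\ll b-a$; such lengths must instead be charged to $\int\min\bigl(1,|f''|/(f')^2\bigr)$, which requires a pointwise lower bound on $h$ that you have not established.

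The missing ingredients are exactly the two mechanisms the paper uses, run component by component with no need to count components. First, classify each $I_j$ by whether $|f'|$ varies by more than a factor of $2$ on it. If it does, the total variation of $1/f'$ over $I_j$ equals $\int_{I_j}|f''|/(f')^2\,dx$ and is at least half of $1/\inf_{I_j}|f'|$, so both boundary terms and the $\phi'$ term of the integration by parts are absorbed into the integral term (the paper's Lemma 5.1). If it does not, and $I_j$ touches no partition point, then $|f''|/(f')^2=1$ at \emph{both} endpoints of $I_j$; since $f''$ and $f'''$ keep constant sign there, $f'$ and $f''$ are monotone on $I_j$, so $|f''(x)|$ is minimized and $|f'(x)|$ maximized at endpoints, and together with the factor-$2$ control this forces $|f''(x)|/(f'(x))^2\ge 1/4$ on all of $I_j$. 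Hence no integration by parts is needed on such components: the trivial bound $\|\phi\|_{L^\infty}|I_j|$ is at most $4\|\phi\|_{L^\infty}\int_{I_j}\min\bigl(1,|f''|/(f')^2\bigr)\,dx$, and no endpoint terms are created at all. Only the (at most two) small-variation components abutting the partition points produce $1/|f'|$ terms, yielding precisely the $J$-sum. This is what makes an unbounded, even infinite, number of components harmless, and it is the content your proposed ``dichotomy organization'' leaves open; without these two lemmas the argument does not close.
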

Although the conclusion of Theorem 1.2 is weaker than that of Theorem 1.1, it can still provide relevant information. Suppose we replace $f(x)$ by 
$\lambda f(x)$ in $(1.5)$, where $\lambda$ is a real parameter. Then $(1.5)$ becomes
\[\bigg|\int_a^b e^{i \lambda f(x)}\phi(x)\,dx\bigg| \leq \]
\[ B (||\phi||_{L^{\infty}} + ||\phi'||_{L^1})\bigg(\int_a^b\min\bigg(1, \bigg|{f''(x) \over \lambda (f'(x))^2}\bigg|\bigg) dx + \min\big( b - a, 
\sum_{x \in J} {1 \over |\lambda f'(x)|} \big) \bigg) \tag{1.6}\]
Note that if the decay rate of the integral term in $(1.6)$ is slower than $|\lambda|^{-1}$ as $|\lambda| \rightarrow \infty$, as is often the case, 
then the additional terms on the right will not affect the overall decay rate.

\noindent {\bf Example 1.}  On $(0, {1 \over 2}]$, let $f(x) = e^{-(|\ln x|^k)}$ for some $k > 1$. If we define $f(0) = 0$, then $f(x)$ is smooth on $[0,{1 \over 2}]$ with a zero of infinite
order at $x = 0$. One can directly compute 
\[f'(x) = -k e^{-|\ln x|^k}  \bigg({|\ln x|^{k - 1} \over x}\bigg) \tag{1.7a}\]
\[f''(x) = k^2 e^{-|\ln x|^k}  \bigg({|\ln x|^{2k - 2} \over x^2} + o\bigg({|\ln x|^{2k - 2} \over x^2}\bigg)\bigg)\tag{1.7b}\]
\[f''''(x) = -k^3 e^{-|\ln x|^k} \bigg({|\ln x|^{3k - 3} \over x^3} + o\bigg({|\ln x|^{3k - 3} \over x^3}\bigg)\bigg) \tag{1.7c}\]
By $(1.7b)-(1.7c)$,  if $c > 0$ is sufficiently small then Theorem 1.2 applies with only an endpoint term at $x = c$, where the error term is $O(|\lambda|^{-1})$, much smaller
than the contribution we will obtain from the integral. By $(1.7a)-(1.7b)$, we have
\[\frac{f''(x)}{(f'(x))^2} = \frac{1}{ e^{-|\ln x|^k}}(1 + o(1)) \]
Consequently, on $[0,c]$ for $c$ sufficiently small,  by Theorem 1.2 we have
\[\bigg|\int_0^c e^{i\lambda  e^{-|\ln x|^k} }\phi(x)\,dx\bigg| \leq \]
\[ 2B (||\phi||_{L^{\infty}} + ||\phi'||_{L^1})\bigg(\int_0^c \min\bigg(1,\frac{1}{ |\lambda| e^{-|\ln x|^k}}\bigg)\, dx  + O(|\lambda|^{-1})\bigg) \tag{1.8}\]
We split the integral in $(1.8)$ dyadically as 
\[ \int_0^c \min\bigg(1,\frac{1}{ |\lambda| e^{-|\ln x|^k}}\bigg)\, dx = \]
\[|\{x \in [0,c]: e^{-|\ln x|^k} < |\lambda|^{-1}\}\big| + 
\sum_{i = 0}^{\infty} 2^{-i}|\{x \in [0,c]: 2^i|\lambda|^{-1} \leq e^{-|\ln x|^k} < 2^{i+1}|\lambda|^{-1}\}| \tag{1.9}\]
Since the sublevel measure sets of $e^{-|\ln x|^k}$ grow more slowly than that of $x^n$ for any $n$, the sum in $(1.9)$ decreases rapidly in $i$ and the overall sum is bounded by a 
constant times that of the first term, which in turn is comparable to the $|\{x \in [0,c]: e^{-|\ln x|^k} < |\lambda|^{-1}\}\big| $ term.  Since the endpoint term is much smaller, the oscillatory 
integral $(1.6)$ satisfies the sublevel set bound
\[\bigg|\int_0^c e^{i\lambda  e^{-|\ln x|^k} }\phi(x)\,dx\bigg| \leq  B' (||\phi||_{L^{\infty}} + ||\phi'||_{L^1}) \big|\{x \in [0,c]: e^{-|\ln x|^k} < |\lambda|^{-1}\}\big| \tag {1.10}\]
This can be considered a canonical sublevel set bound since the expression $|\{x \in [0,c]: e^{-|\ln x|^k} < |\lambda|^{-1}\}|$ up to a constant gives the maximum size of a period of
the phase function $e^{i\lambda  e^{-|\ln x|^k}}$.

Working out the measure on the  right-hand side of $(1.10)$, we see that it equals the measure of the set where
$-|\ln x|^k < - \ln |\lambda|$, which is the same as the measure of the set where $-\ln x = |\ln x| > (\ln |\lambda|)^{1 \over k}$, namely $e^{-(\ln |\lambda|)^{1 \over k}}$. 

\noindent {\bf Example 2.}  On $(0, {1 \over 2}]$, let $f(x) =e^{-(x^{-m})}$ for some $m > 0$. Like in the last example, $f(x)$ extends to a smooth function on
$[0,{1 \over 2}]$ with a zero of infinite order at $x = 0$. This time we compute
\[ f'(x) = me^{-(x^{-m})}x^{-m - 1} \tag{1.11a}\]
\[f''(x) = -m^2 e^{-(x^{-m})}(x^{-2m - 2} + o(x^{-2m -2}))\tag{1.11b}\]
\[f'''(x) = m^3 e^{-(x^{-m})}(x^{-3m - 3} + o(x^{-3m - 3})) \tag{1.11c)}\]
Thus we have
\[\frac{f''(x)}{(f'(x))^2} =  \frac{1}{e^{-(x^{-m})}}(1 + o(1))\]
Hence similarly to the last example, on $[0,c]$ for $c$ sufficiently small there is just the one negligible endpoint term in Theorem 1.2 and we have
\[\bigg|\int_0^c e^{i\lambda e^{-(x^{-m})} }\phi(x)\,dx\bigg| \leq \]
\[ 2 B (||\phi||_{L^{\infty}} + ||\phi'||_{L^1})\int_0^c \min\bigg(1,\frac{1}{ |\lambda| e^{-(x^{-m})}}\bigg)\, dx  \tag{1.12}\]
Since the sublevel set measures of $e^{-(x^{-m})}$ once again grow slower than that of any $x^n$, exactly as in the steps leading to $(1.10)$ we again have the canonical 
sublevel set bound 
\[\bigg|\int_0^c e^{i\lambda  e^{-(x^{-m})}}\phi(x)\,dx\bigg| \leq  B' (||\phi||_{L^{\infty}} + ||\phi'||_{L^1}) \big|\{x \in [0,c]: e^{-(x^{-m})} < |\lambda|^{-1}\}\big|
 \tag{1.13}\]
This time the measure in question works out to $(\ln |\lambda|)^{-{1 \over m}}$, which is therefore an upper bound for the oscillatory integral. 

\section{A result concerning the behavior of real analytic functions on parallel lines and some consequences.}

The following theorem follows fairly quickly from Lemma 6.17 of [Mi]. But for completeness we include a proof in section 6 which shows how
it can be derived in relatively short order from resolution of singularities (and the proof will not be all that different from the proof of Lemma 6.17 in [Mi]).

\begin{theorem}
Suppose $g(x_1,...,x_n)$ is a real analytic function defined on a neighborhood of the origin, not identically zero. Then there is an $n-1$-dimensional ball
 $B_{n-1}(0,\eta)$ and a $k \geq 0$ such that for each $(x_1,...,x_{n-1})$ in $B_{n-1}(0,\eta)$ either $g(x_1,...,x_n) = 0$ for all $|x_n| < \eta$ or
  there is a $0 \leq l \leq k$, which may  depend on $(x_1,...,x_{n-1})$, such that  for all $|x_n| < \eta $ one has
\[0 <  {1 \over 2} |\partial_{x_n}^l g(x_1,...,x_{n-1},0)| <  |\partial_{x_n}^l g(x_1,...,x_n)| <  2|\partial_{x_n}^l g(x_1,...,x_{n-1},0)|\tag{2.1} \]
 The set of $(x_1,...x_{n-1})$ for which $g(x_1,...,x_n) = 0$ for all $|x_n| < \eta$ has measure zero.
\end{theorem}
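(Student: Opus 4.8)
The plan is to deduce the statement from resolution of singularities applied to $g$. First I would invoke Hironaka's theorem (or the explicit form used in e.g. [Mi]) to obtain, on a neighborhood of the origin, a proper real analytic map $\pi : \mathcal{M} \to \R^n$, which is a composition of blow-ups along smooth centers, such that $g \circ \pi$ vanishes to the order given by a normal crossings divisor: locally on $\mathcal{M}$ one has coordinates $(u_1,\dots,u_n)$ in which $(g\circ\pi)(u) = U(u)\, u_1^{a_1}\cdots u_n^{a_n}$ with $U$ real analytic and nonvanishing, the Jacobian of $\pi$ having a similar monomial form. The key idea is that on the resolved space the \emph{order of vanishing} of $g\circ\pi$ along any line (in the lifted coordinates) is locally constant in the obvious sense, being controlled by the exponents $a_i$; one then pushes this information down to $\R^n$ by the properness of $\pi$.

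The second step is the reduction to a statement about lines parallel to the $x_n$-axis. For fixed $(x_1,\dots,x_{n-1})$, consider the one-variable function $h_{x'}(x_n) = g(x_1,\dots,x_{n-1},x_n)$. Either $h_{x'} \equiv 0$ near $0$, or it has some finite order of vanishing, and more generally for each small $x_n$ there is a least $l$ with $\partial_{x_n}^l g(x',x_n) \neq 0$. What I want is a \emph{uniform} bound $l \le k$ on the needed derivative order, together with the two-sided comparison (2.1) between $|\partial_{x_n}^l g(x',x_n)|$ and its value at $x_n = 0$, valid on a fixed ball $B_{n-1}(0,\eta)\times(-\eta,\eta)$. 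The comparison (2.1) is exactly the kind of estimate that resolution delivers: after resolving, on each chart $\partial_{x_n}^l(g\circ\pi)$ is again (essentially) a monomial times a unit for an appropriate $l$ bounded by $\max_i a_i$, and a nonvanishing analytic unit on a small enough compact set automatically satisfies a two-sided bound with ratio arbitrarily close to $1$; shrinking the charts to get the constant $2$ (and $1/2$) is routine. The bound $k$ is then the maximum of the finitely many exponents appearing in a finite cover of (a compact neighborhood of the origin in) $\mathcal{M}$, using properness of $\pi$ to pass to a finite subcover.

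The third step is the measure-zero claim. The set $Z = \{x' : g(x',x_n) = 0 \text{ for all } |x_n| < \eta\}$ is contained in the zero set of every $\partial_{x_n}^l g(\cdot, 0)$ for $l \le k$, hence in the zero set of the single real analytic function $G(x') = \sum_{l=0}^k (\partial_{x_n}^l g(x',0))^2$. Since $g$ is not identically zero, one can choose the $x_n$-direction (after a generic linear change of coordinates, or simply by the hypothesis that $\partial_{x_n}^l g(0) \ne 0$ for some $l$ — which can be arranged) so that $G \not\equiv 0$; then $Z$ lies in the zero set of a nontrivial real analytic function and hence has measure zero by the standard fact that such zero sets are null.

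The main obstacle I anticipate is the bookkeeping in passing the uniform derivative-order bound $k$ and the comparison constants through $\pi$: a single line $\{x'\}\times(-\eta,\eta)$ downstairs has preimage a curve in $\mathcal{M}$ that may meet several charts and several components of the exceptional divisor, so one must argue that the order of vanishing of $h_{x'}$ and the comparison (2.1) can be read off chart-by-chart and that the finitely many local bounds assemble into global ones on a fixed $\eta$. This is precisely the content of Lemma 6.17 of [Mi], and the proof here will follow the same strategy; I would present it by first establishing a clean one-chart lemma for monomials-times-units and then covering.
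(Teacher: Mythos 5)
The core gap is in your first two steps: you resolve $g$ itself in all $n$ variables and then hope to ``push down'' the monomial-times-unit structure to statements about vertical lines. But $(2.1)$ is a two-sided comparison of $\partial_{x_n}^l g$ along a fixed segment $\{x'\}\times(-\eta,\eta)$ with its value at the center of that segment, and this does not follow from an $n$-dimensional normal crossings resolution of $g$: the local coordinates $(u_1,\dots,u_n)$ in which $g\circ\pi=U(u)\,u_1^{a_1}\cdots u_n^{a_n}$ have no reason to be compatible with the projection $(x',x_n)\mapsto x'$, so the preimage of a vertical segment is a curve along which all of $u_1,\dots,u_n$ vary; the monomial factor can degenerate along that curve (it can run into, or along, components of the exceptional divisor), and $\partial_{x_n}$ is not a coordinate vector field upstairs, so your claim that $\partial_{x_n}^l(g\circ\pi)$ is ``again a monomial times a unit for an appropriate $l\le\max_i a_i$'' is not correct as stated. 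The difficulty you flag at the end is therefore not bookkeeping to be deferred: making the resolution respect the fibration by vertical lines is exactly the missing idea, and plain Hironaka applied to $g$ in $n$ variables does not supply it (one needs a fibered or preparation-type statement, which is what Lemma 6.17 of [Mi] encapsulates).

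The paper's proof supplies this structure differently: it Taylor expands $g=\sum_i g_i(x_1,\dots,x_{n-1})x_n^i$, uses Noetherianity of $\R[[x_1,\dots,x_{n-1}]]$ to find $i_0$ with $g_0,\dots,g_{i_0}$ generating the ideal of all coefficients, and applies resolution of singularities in the $x'$ variables only, to the nonzero $g_i$ with $i\le i_0$ and to all nonzero differences $g_i-g_{i'}$. The maps $\bar\phi_j(y',y_n)=(\phi_j(y'),y_n)$ then carry vertical lines to vertical lines. Monomializing the differences forces the monomials of the $g_i\circ\phi_j$ (after recentering at a lifted preimage of the origin) to be totally ordered by divisibility, so a minimal monomial $m_{i_1 j}$, depending only on the first $n-1$ variables, factors out of the entire series; hence $\partial_{z_n}^{i_1}(g\circ\bar\phi_j)=r(z)\,m_{i_1 j}(z_1,\dots,z_{n-1})$ with $r(0)\neq 0$, and shrinking so that $|r|$ varies by a factor less than $2$ gives $(2.1)$ with $l=i_1$, the uniform $\eta$ and $k$ coming from compactness over finitely many charts. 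Your measure-zero step also needs a small repair: you may not rotate the $x_n$-direction (the statement fixes it), and $\partial_{x_n}^l g(0)\neq 0$ cannot ``be arranged''; but no change is needed, since $g\not\equiv 0$ already forces some $\partial_{x_n}^m g(\cdot,0)\not\equiv 0$ on the ball, and the exceptional set is contained in its zero set.
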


It is important to note that in Theorem 2.1, $l$ can be (and often is) equal to zero.
We next give some corollaries of interest of Theorem 2.1. As a rather immediate consequence of Theorem 2.1 we have the following, which we point out is 
nontrivial only when $ \partial_{x_n}^k f(0,...,0,0)$ is zero for each $k \geq 1$.

\noindent {\bf Corollary 2.1.1.}  Suppose $f(x_1,...,x_n)$ is a real analytic function on a neighborhood of the origin, not identically zero.
 Then there is an $n-1$ dimensional ball $B_{n-1}(0,\eta)$
and a positive integer $p$ such that if $(x_1,...,x_{n-1}) \in B_{n-1}(0,\eta)$ and $s \in \R$, then either $f(x_1,...,x_n) = s$ for all $|x_n| < \eta$, or 
$f(x_1,...,x_n) = s$ has at most $p$ solutions $x_n$.

\begin{proof} If $\partial_{x_n} f(x_1,...,x_n)$ is identically zero, then the result is immediate, so we assume it is not identically zero. We apply Theorem 2.1 to
 $\partial_{x_n} f(x_1,...,x_n)$ and let $\eta$ be as in that theorem. If $(x_1,...,x_{n-1})$ is such that $\partial_{x_n} f(x_1,...,x_n)$ is identically zero in $x_n$, then
$f(x_1,...,x_n)$ is constant in $x_n$ and for each $s$ either $f(x_1,...,x_n) = s$ for all $|x_n| < \eta$ or $f(x_1,...,x_n) = s$ for no $x_n$. If 
on the other hand $(x_1,...,x_{n-1})$ is such that $\partial_{x_n} f(x_1,...,x_n)$ is not identically zero in $x_n$, then $(2.1)$ holds on $|x_n| < \delta$
for $g = \partial_{x_n}f$. Thus $(2.1)$ also holds on $|x_n| < \delta$ for $g = f$, where $l$ is replaced by $l + 1 \geq 1$. Thus $f$ has a nonvanishing 
derivative of order between $1$ and $k + 1$ in the $x_n$ variable. This  implies $f(x_1,...,x_n) = s $ has at most $k + 1$ solutions in the $x_n$ variable 
for this value of $(x_1,...,x_{n-1})$. This completes the proof of the corollary.
\end{proof}

\noindent As a consequence of Corollary 2.1.1, we have the following.

\noindent {\bf Corollary 2.1.2.} Suppose $f_1(x_1,...,x_n)$,...,$f_l(x_1,...,x_n)$ are real analytic functions on a neighborhood of the origin, none identically zero. Then there
is an $n-1$ dimensional ball $B_{n-1}(0,\eta)$ and a positive integer $p$ such that for each $s_1,...,s_l$ and each $(x_1,...,x_{n-1}) \in B_{n-1}(0,\eta)$,
 the set $\{x_n: |x_n| < \eta $ and $f_i(x_1,...,x_n) < s_i$ for each $i\}$ consists of at most $p$ intervals.

\begin{proof} By simply intersecting the sets in question, it suffices to take $l = 1$. In this case, for a given $(x_1,...,x_{n-1})$  the intervals comprising $\{x_n: |x_n| < \eta $ and $f_1(x_1,...,x_n) < s_1\}$ have a uniformly bounded number of endpoints by Corollary 2.1.1 and the result follows. 
\end{proof}

The next consequence of Theorem 2.1 is of the same general type as Theorem 1.2 of [ClMi] on decay rates of parameterized families of
oscillatory integrals. Similar to Corollary 2.1.1, the statement follows relatively quickly from the
Van der Corput lemma unless  $ \partial_{x_n}^k f(0,...,0,0)$ is zero for each $k \geq 1$.

\noindent {\bf Corollary 2.1.3.} Suppose $f(x_1,...,x_n)$ is a real analytic function on a neighborhood of some closed ball ${\bar B}(0,r)$ centered at the origin. Suppose
$\phi(x_1,...,x_n)$ is a $C^1$ function supported in $B(0,r)$. Define $I(\lambda)$ by
\[I(\lambda) = \int_{\R} e^{i \lambda f(x_1,...,x_n)} \phi(x_1,...,x_n)\,dx_n\tag{2.2}\]
Then there is a positive integer $p$ and a
function $A(x_1,...,x_{n-1})$, depending on $f$ and $r$, such that for all $(x_1,...,x_{n-1})$ either $f(x_1,...,x_n)$ is constant in $x_n$, or we have an inequality
\[|I(\lambda)| \leq A(x_1,...,x_{n-1}) \big(\sup_{x_n} |\phi(x_1,...,x_n)| + \int_{\R} |\partial_{x_n}\phi(x_1,...,x_n)|\,dx_n\big) |\lambda|^{-{1 \over p}} \tag{2.3}\]

\begin{proof} If $\partial_{x_n}^2 f(x)$ is identically zero, it follows immediately from applying the first derivative Van der Corput lemma in the form $(4.3')$
in the $x_n$ direction. If $\partial_{x_n}^2 f(x)$ is not identically zero, we apply Theorem 2.1 to $\partial_{x_n}^2 f(x)$ and use the resulting 
statement in conjunction 
with the Van der Corput lemma, again in the form $(4.3')$. On vertical lines where $\partial_{x_n}^2 f(x) = 0$ we again use the $p = 1$ case of the Van der 
Corput lemma, while on vertical lines where $(2.1)$ holds we use the Van der Corput lemma for $p \geq 2$. 
\end{proof}

\section{Averaging Theorem 1.1 over different directions.}

\subsection{Using polar coordinates to reduce to one-dimensional integrals.}

We return to the setting of $(1.1)$. Namely we look at the integral
\[I(\lambda) = \int  e^{i\lambda f(x_1,...,x_n)} \phi(x_1,...,x_n)\,dx_1\,...\,dx_n\tag{3.1}\]
Here $f(x_1,...,x_n)$ is a  nonconstant real-valued $C^2$ function on a neighborhood of a ball $|x| \leq b$  with $\nabla f(0) = 0$ 
and  $\phi(x_1,...,x_n)$ is a  $C^1$ function supported in  $|x| < b$.  We integrate $(3.1)$ in polar coordinates, obtaining
\[I(\lambda) = c_n \int_{S^{n-1}}\int_0^{\infty} e^{i\lambda f(r\omega)} r^{n-1}\phi(r\omega)\,dr \,d\omega\tag{3.2}\]
To deal with the $r^{n-1}$ factor in $(3.2)$, we will decompose the $r$ integral in $(3.2)$ dyadically in $r$. Namely we let $\psi(r)$ be a nonnegative smooth  compactly supported
 function on $[1/2, 2]$ such that $\sum_{m = -\infty}^{\infty} \psi(2^m r) = 1$ on $(0,\infty)$, and we let $\Phi_{m,\omega}(r) = \phi(r\omega)(2^m r)^{n-1} \psi(2^m r)$. 
Then  we have $r^{n-1}\phi(r\omega) = \sum_{m = - \infty}^{\infty} 2^{-m (n-1)} \Phi_{m,\omega}(r)$ and $(3.2)$ becomes 
\[I(\lambda) = c_n\sum_{m = -\infty}^{\infty}  \int_{S^{n-1}}2^{-m (n-1)}\int_{2^{-m-1}}^{2^{-m + 1}} e^{i\lambda f(r\omega)}  \Phi_{m,\omega}(r)\,dr \,d\omega\tag{3.3}\]
Note that $||\Phi_{m,\omega}(r)||_{L^{\infty}(r)}$ and $||\partial_r\Phi_{m,\omega}(r)||_{L^1(r)}$ are bounded uniformly in $\omega$ and $m$ by $C||\phi||_{L^\infty}$ and 
 $C(||\phi||_{L^\infty} + ||\nabla \phi||_{L^{\infty}})$ respectively for a uniform constant $C$. Thus we may apply Theorem 1.1 or 1.2 to the 
$r$ integral, and the factor $||\phi||_{L^{\infty}} + ||\phi'||_{L^1}$ appearing in these theorems can be replaced by $||\phi||_{L^\infty} + ||\nabla \phi||_{L^{\infty}}$.

\subsection{The real analytic situation.}

 When $f$ is real analytic and we are applying Theorem 1.1 in the above fashion,
we would also like the factor $B_{A,k}$ in Theorem 1.1 to be uniform in $k$ and $m$. This holds for the following reason. If one applies Theorem 2.1 to $h(x_1,...,x_n,r) = 
\partial_{r}(f(rx_1,...,rx_n))$ on a neigborhood of some $(x_1,...,x_n,r)$ with $r \geq 0$ and $(x_1,...,x_n) \in S^{n-1}$, then one obtains uniform constants  $B_{A,k}$  locally. 
Compactness will then give that these constants are uniform throughout the domain of integration of $(3.3)$. 

At first glance, one might think that in the above we might incur some
 constants $j B_{A,1}$ from the $p = 1$ case of Theorem 1.1 that are unbounded in $j$, but this is not the case. For by Theorem 2.1  applied to $\partial_r^2 (f(rx_1,...,rx_n)) = \partial_r h$,
 locally there is an $N$ such that for
 each $(x_1,...,x_n) \in S^{n-1}$ either $h(x_1,...,x_n,r)$ is constant in $r$ (when $\partial_r^2 (f(rx_1,...,rx_n)) $ is identically zero in $r$) or is the union
 of at most $N$ intervals on which $ \partial_r h$ is 
strictly monotonic in $r$ (when $(2.1)$ holds for $\partial_{r}^2 h$.) In either case, $j$ is always at most $N$ here.

One technical point worth mentioning here is that for one to be able to apply Theorem 1.1 in the current setting, one needs that
the set of $\omega \in S^{n-1}$ for which $f(r\omega)$ is constant in $r$ must have measure zero.  But this is true for any nonconstant
real analytic function; any $\omega$ for which $f(r\omega)$ is constant in $r$ is such that this constant is $f(0)$, and nonconstant real analytic functions take any given value on a set of
measure zero. 

We have the following result for the real analytic situation.

\begin{theorem}
Suppose $f(x)$ is a nonconstant real analytic function on a  neighborhood of a ball $|x| \leq  b$, where $b \leq 1$, such that $\nabla f(0) = 0$. Suppose $\phi(x)$ is $C^1$ and supported
 on  $|x| < 2^{-m_0 + 1}$ for some integer
$m_0$ with $2^{-m_0 + 1} < b$.  For a given integer $m \geq m_0$ and an $x$ with $|x| \in [2^{-m-1}, 2^{-m + 1}]$, let $J_m(x)$ denote the line segment from $2^{-m-1}\frac{x}{|x|}$ to
 $2^{-m+1}\frac{x}{|x|}$. 

There is a constant $C$ depending on $f$ such that the following holds, where $f_r$ denotes the derivative of $f$  in the radial direction. 
\[|I(\lambda)|  \leq C (||\phi||_{L^{\infty}} + ||\nabla \phi||_{L^{\infty}}|) \int_{|x| < 2^{-m_0 + 1}} \min\bigg(1, \bigg| \frac{f_{rr}(x)}{\lambda(f_r(x))^2}\bigg|\bigg)\,dx\,\,+ \]
\[C (||\phi||_{L^{\infty}} + ||\nabla \phi||_{L^{\infty}}|) \sum_{m \geq m_0}  \int_{2^{-m-1} < |x| < 2^{-m+1}} \min\bigg(1, \frac{1}{|\lambda| | x|\sup_{y \in J_m(x)} |f_r(y)|}\bigg)\,dx \tag{3.4}\]
\end{theorem}
\begin{proof}
We insert $(1.4)$ in the right-hand integrals of $(3.3)$ and then go back from polar to rectangular 
coordinates in the integral term of $(1.4)$. We obtain
\[|I(\lambda)|  \leq C (||\phi||_{L^{\infty}} + ||\nabla \phi||_{L^{\infty}}|) \int_{|x| < b} \min\bigg(1, \bigg| \frac{f_{rr}(x)}{\lambda (f_r(x))^2}\bigg|\bigg)\,dx  \]
\[+\,\,C (||\phi||_{L^{\infty}} + ||\nabla \phi||_{L^{\infty}}|) \sum_{m \geq m_0}2^{-m(n-1)}\int_{\omega \in S^{n-1}}\min\bigg(2^{-m+1}, \frac{1}{|\lambda|\sup_{r \in [2^{-m-1}, 2^{-m + 1}]} |f_r(r\omega)|}\bigg)\, d\omega \tag{3.5}\]
Here $C$ is a constant which depends on the function $f$. Note that
\[\min \bigg(2^{-m+1}, \frac{1}{|\lambda|\sup_{r \in [2^{-m-1}, 2^{-m + 1}]} |f_r(r\omega)|}\bigg) \]
\[\leq  2 \int_{2^{-m-1}}^{2^{-m + 1}}\min\bigg(1, \frac{1}{|\lambda| r\sup_{r \in [2^{-m-1}, 2^{-m + 1}]} |f_r(r\omega)|}\bigg)\,dr\tag{3.6}\]
For $x$ such that $2^{-m-1} < |x| < 2^{-m+1}$, the integral $(3.6)$ can be rewritten as
\[\int_{J_m(x)} \min\bigg(1, \frac{1}{|\lambda|r\sup_{y \in J_m(x)} |f_r(y)|}\bigg)\,dr\tag{3.7}\]
We now insert $(3.7)$ into $(3.5)$ and go back from polar to rectangular coordinates. The result is $(3.4)$ and we are done.
\end{proof}

To use $(3.4)$  (or $(1.4)$ for that matter) it is helpful to have a way to bound integrals such as the ones appearing in Theorem 3.1 by sublevel set measures. 
 A reasonably general statement of the type needed is given by the following relatively easy to prove lemma.

\begin{lemma} Let $(E,\mu)$ be a finite measure space and suppose $g(x)$ is a measurable function on $E$ such that for some positive constants
 $C$ and $\delta$,
 for all $t  > 0$ one has $\mu(\{x \in E: |g(x)| < t\}) \leq Ct^{\delta}$.  Suppose $\epsilon > 0$. Then there is a constant $D_{\delta, \epsilon} > 0$
such that the following holds for all $\lambda \neq 0$.
\begin{itemize}
\item If $\delta  < \epsilon$, then $\int_E \min(1,  |\lambda g|^{-\epsilon})\,d\mu < 
C D_{\delta,\epsilon}|\lambda|^{-\delta}$
\item If $\delta = \epsilon$, then $\int_E \min(1,|\lambda g|^{-\epsilon})\,d\mu < 
C D_{\epsilon,\epsilon}(1 + \log_+ |\lambda|)|\lambda|^{-\epsilon} + \mu(E)|\lambda|^{-\epsilon}$
\item If $\delta > \epsilon$, then $\int_E \min(1,|\lambda g|^{-\epsilon})\,d\mu < 
C(|\lambda|^{-\delta} +  D_{\delta,\epsilon}|\lambda|^{-\epsilon})  + \mu(E)|\lambda|^{-\epsilon}$
\end{itemize}

\noindent As a result, we have that
\begin{itemize}
\item If $\delta  \neq \epsilon$,  there is a $F_{\delta,\epsilon,g,E} > 0$ such that
$\int_E \min(1,  |\lambda g|^{-\epsilon})\,d\mu < F_{\delta,\epsilon,g,E} |\lambda|^{-\min(\delta,\epsilon)}$
\item If $\delta = \epsilon$, there is a $F_{\epsilon, g, E} > 0$ such that
$\int_E \min(1,  |\lambda g|^{-\epsilon})\,d\mu < F_{\epsilon,g,E}(1 + \log_+|\lambda|)|\lambda|^{-\epsilon}$
\end{itemize}
\end{lemma}

\begin{proof}
 It is natural to split the integral $\int_E \min(1,|\lambda g|^{-\epsilon})\,d\mu$ into $|g| \geq {1 \over |\lambda|}$
and $|g| < {1 \over |\lambda|}$ parts, obtaining
\[\int_E \min(1,|\lambda g|^{-\epsilon})\,d\mu = \mu(\{x \in E: |g(x)| < {1 \over |\lambda|}\}) + 
{1 \over |\lambda|^{\epsilon}} \int_{|g| \geq {1 \over |\lambda|}}|g|^{-{\epsilon}}\,d\mu \tag{3.8}\]
We first do the $\delta \geq \epsilon$ cases. We further divide the integral in $(3.4)$ into $|g| \geq 1$ and $|g| < 1$ parts. Using the bound
$\min(1,|\lambda g|^{-\epsilon}) \leq |\lambda|^{-\epsilon}$ on the $|g| \geq 1$ portion, we obtain
\[\int_E \min(1,|\lambda g|^{-\epsilon})\,d\mu \leq \mu(\{x \in E: |g(x)| <  {1 \over |\lambda|}\}) + 
\mu(E)|\lambda|^{-\epsilon}+ {1 \over |\lambda|^{\epsilon}} \int_{1 \geq |g| \geq {1 \over |\lambda|}}|g|^{-{\epsilon}}\,d\mu \tag{3.9}\]
If $|\lambda| < 1$ we take the right-hand integral in $(3.9)$ to be zero. By our assumptions, the first term on the right of $(3.9)$ is bounded by 
$C|\lambda|^{-\delta}$. We write the last term in $(3.9)$ as 
a sum in $j$ for $0 \leq j \leq  \lfloor \log_2 |\lambda| \rfloor$ of the integrals over the domains where 
$2^j|\lambda|^{-1} \leq |g(x)| < 2^{j+1}|\lambda|^{-1} $.  We obtain 
\[\int_E \min(1,|\lambda g|^{-\epsilon})\,d\mu \leq C|\lambda|^{-{\delta}} +  
\mu(E)|\lambda|^{-\epsilon} + \sum_{j=0}^{ \lfloor \log_2 |\lambda| \rfloor} |\lambda|^{-\epsilon} 
\int_{2^j|\lambda|^{-1} \leq |g(x)| < 2^{j+1}|\lambda|^{-1}}|g(x)|^{-\epsilon}\,d\mu\tag{3.10}\]
Using the hypothesis $\mu({x \in E: |g(x)| < a}) \leq Ca^{\delta}$ in each term of the sum $(3.10)$ we get that 
\[\int_E \min(1,|\lambda g|^{-\epsilon})\,d\mu \leq C|\lambda|^{-{\delta}} + \mu(E) |\lambda|^{-\epsilon} + 
 C \sum_{j=0}^{ \lfloor \log_2 |\lambda| \rfloor}|\lambda|^{-\epsilon}(2^j|\lambda|^{-1})^{-\epsilon} (2^{j+1}|\lambda|^{-1})^{\delta} \tag{3.11}\]
\[= C|\lambda|^{-{\delta}} + \mu(E) |\lambda|^{-\epsilon} +  2^{\delta}C \sum_{j=0}^{ \lfloor \log_2 |\lambda| \rfloor} 2^{j(\delta - \epsilon)}|\lambda|^{-\delta} \tag{3.12}\]
If $\delta = \epsilon$, the sum is bounded by $( \lfloor \log_2 |\lambda| \rfloor + 1)|\lambda|^{-\delta}$. Since
 $ \lfloor \log_2 |\lambda| \rfloor \leq  \log_2 e \log |\lambda|$, we get the second statement in Lemma 3.2. 
If $\delta > \epsilon$, the sum is bounded by a constant depending on $\delta$ and $\epsilon$ times $2^{ \lfloor \log_2 |\lambda| \rfloor(\delta - \epsilon)}
|\lambda|^{-\delta} = |\lambda|^{-\epsilon}$.
Inserting this into the sum of $(3.12)$ gives the third statement in Lemma 3.2.

Now we suppose $\delta < \epsilon$. This time we do not break into $|g| \geq 1$ and $|g| < 1$ parts and instead just do the dyadic decomposition in $|g(x)|$. Equation $(3.12)$ gets
replaced by
\[\int_E \min(1,|\lambda g|^{-\epsilon})\,d\mu \leq C|\lambda|^{-{\delta}} + 2^{\delta}C \sum_{j=0}^{\infty} 2^{j(\delta - \epsilon)}|\lambda|^{-\delta} \tag{3.13}\]
The sum in $(3.13)$ is bounded by a constant depending on $\delta$ and $\epsilon$ times $|\lambda|^{-\delta}$, giving 
the first statement in Lemma 3.2 and we are done.
\end{proof}

\noindent {\bf Example.} Let $f(x_1,...,x_n) = \sum_{\alpha} c_{\alpha}x^{\alpha}$ be a polynomial such that $c_{\alpha} > 0$ for all $\alpha$ and 
$\alpha = (\alpha_1,...,\alpha_n)$ where each 
$\alpha_{j}$ is even, with  no $\alpha$ being the zero vector. Observe that  $\partial_r x^{\alpha} = |x|^{-1} \sum_j x_j \partial_{x_j} x^{\alpha} =k_{\alpha}|x|^{-1}x^{\alpha}$ for $k_{\alpha}= \sum_j \alpha_{j} \geq 2$, so that $\partial_{rr} x^{\alpha} = -|x|^{-2}k_{\alpha}x^{\alpha} + |x|^{-2}k_{\alpha}^2x^{\alpha}
= l_{\alpha}|x|^{-2}x^{\alpha}$, where  $l_{\alpha} =k_{\alpha}^2 -k_{\alpha} > 0$. 
Inserting these into $(3.4)$, it is not hard to verify that both terms of $(3.4)$ are bounded by
\[C' (||\phi||_{L^{\infty}} + ||\nabla \phi||_{L^{\infty}}) \int_{|x| < b} \min\bigg(1,  \frac{1}{|\lambda f(x)|}\bigg)\,dx\tag{3.14}\]
If $f(x)$ satisfies sublevel set estimates of the form $|\{|x| < b: |f(x)| < t\}| < At^{\delta}$ for some $0 <  \delta < 1$, then Lemma 3.2 gives that $(3.14)$ is bounded by
$A'|\lambda|^{-{\delta}}$. Thus in this situation, we have the canonical bounding of an oscillatory integral by a sublevel set measure given by $|I(\lambda)| < 
A''(||\phi||_{L^{\infty}} + ||\nabla \phi||_{L^{\infty}}) |\lambda|^{-{\delta}}$. If the supremum of the $\delta$ for which the sublevel set measure bounds holds is less than $1$ then one 
cannot do better;  by resolution of singularities the best exponent
for the oscillatory integral will always be the same as the best exponent for the sublevel set measures. We refer to the textbook [AGuV] for more information on these issues.

\subsection{A variation on a theme and some infinitely flat examples.}

In $(3.2)$ we change variables in the $r$ integral, letting $s = r^n$. Correspondingly we define $g_{\omega}(s) = f(s^{\frac{1}{n}}\omega)$ and $\psi_{\omega}(s) = \phi(s^{\frac{1}{n}}\omega)$. Then $(3.2)$ becomes
\[I(\lambda) = \frac{c_n}{n} \int_{S^{n-1}}\int_0^{\infty} e^{i\lambda g_{\omega}(s)}\psi_{\omega}(s)\,ds \,d\omega\tag{3.15}\]
In the form $(3.15)$, we may apply Theorem 1.1 or 1.2 directly to the $s$ integral without any dyadic decomposition, and then integrate the result over $S^{n-1}$. We will see that
for certain classes of functions with a zero of infinite order at the origin, $(3.15)$ bounds $|I(\lambda)|$ by the appropriate sublevel set measure, analogous to the above example and the
examples at the end of section 1.

\noindent {\bf Example 1.} Let $f(x) = e^{-(|\ln |x||^{k})}$, where $k > 1$.
Then $g_{\omega}(s) = e^{-(\frac{|\ln s|}{n})^{k}}$. This time we have the formulas
\[{\partial g_{\omega} \over \partial s } = -\frac{k}{ns} e^{-(\frac{|\ln s|}{n})^{k}} \bigg(\frac{|\ln s|}{n}\bigg)^{k - 1} \tag{3.16a}\]
\[{\partial^2 g_{\omega} \over \partial s^2} =\frac{k^2}{n^2s^2} e^{-(\frac{|\ln s|}{n})^{k}} \bigg(\frac{|\ln s|}{n}\bigg)^{2k - 2} + o\bigg(\frac{k^2}{n^2s^2} e^{-(\frac{|\ln s|}{n})^{k}} \bigg(\frac{|\ln s|}{n}\bigg)^{2k - 2}\bigg) \tag{3.16b}\]
\[{\partial^3 g_{\omega} \over \partial s^3}= -\frac{k^3}{n^3s^3} e^{-(\frac{|\ln s|}{n})^{k}} \bigg(\frac{|\ln s|}{n}\bigg)^{3k - 3} + o\bigg(\frac{k^3}{n^3s^3} e^{-(\frac{|\ln s|}{n})^{k}} \bigg(\frac{|\ln s|}{n}\bigg)^{3k - 3}\bigg) \tag{3.16c}\]
By $(3.16a)-(3.16c)$, if $c$ is sufficiently small then ${\displaystyle {\partial g_{\omega} \over \partial s }, {\partial^2 g_{\omega} \over \partial s^2}}$, and ${\displaystyle {\partial^3 g_{\omega} \over \partial s^3 }}$ are nonzero on $[0,c]$ except at
$s = 0$. Therefore if $\phi$ is supported in $|x| < c$,  in each $s$ integral of $(3.15)$ we can apply Theorem 1.2 and the only endpoint term appearing will come from the right-hand endpoint, which will be $O(|\lambda|^{-1})$.
After integrating the result in $\omega$ we obtain
\[|I(\lambda)| \leq  C (||\phi||_{L^{\infty}} + ||\nabla \phi ||_{L^{\infty} })\bigg(\int_0^c \min\bigg(1,\frac{1}{ |\lambda| e^{-(\frac{|\ln s|}{n})^{k}}}\bigg)\, ds  + O(|\lambda|^{-1})\bigg)
 \tag{3.17}\]
We have a $||\nabla \phi ||_{L^{\infty} }$ factor here due to the presence of the $\psi_{\omega}(s)$ factor. 
Similar to the case of  Example 1 at the end of section 1, the $O(|\lambda|^{-1})$ term in $(3.17)$ is negligible compared to the 
integral term, and like before due to the slow growth rate of the measures of the  sublevel sets of $e^{-(\frac{|\ln s|}{n})^{k}}$, the integral is bounded by a constant times the
 measure of the set
where one takes the left component in the the minimum. Putting this all together we obtain the following, where $m$ denotes one-dimensional Lebesgue measure.
\[|I(\lambda)| \leq  C' (||\phi||_{L^{\infty}} + ||\nabla \phi ||_{L^{\infty} })m(s: e^{-(\frac{|\ln s|}{n})^{k}} < |\lambda|^{-1}) \tag{3.18}\]
We unwrap the measure on the right side of $(3.18)$. It is the same as the measure of the $s$ for which $-(\frac{|\ln s|}{n})^{k} < -\ln|\lambda|$, which in turn is the same as the 
measure of the set where $\frac{|\ln s|}{n} > (\ln|\lambda|)^{\frac{1}{k}}$. Since $|\ln s| = -\ln s$ here, this is the same as the measure of the set where 
$-\ln s > n [(\ln|\lambda|)^{\frac{1}{k}}]$, a set of measure $e^{-n [(\ln|\lambda|)^{\frac{1}{k}}]}$.

We next examine the $n$-dimensional measure of the set where $f(x) = e^{-|\ln |x||^{k}}$ is less than $|\lambda|^{-1}$. We have that $ e^{-|\ln |x||^{k}} < |\lambda|^{-1}$ exactly
when $-|\ln |x||^{k} < -\ln |\lambda|$, which happens when $|\ln |x|| > (\ln |\lambda|)^{\frac{1}{k}}$. Since $|\ln |x|| = -\ln |x|$, this occurs when $\ln |x| < -[(\ln |\lambda|)^{\frac{1}{k}}]$, or $|x| < e^{ -[(\ln |\lambda|)^{\frac{1}{k}}]}$, a set of measure a constant times $e^{ -n[(\ln |\lambda|)^{\frac{1}{k}}]}$. This is exactly the measure appearing in the last paragraph.
So $(3.18)$ implies that the oscillatory integral $I(\lambda)$ satsifies the following natural sublevel set bounds for sufficiently large $|\lambda|$, where $m_n$ denotes $n$-dimensional 
Lebesgue measure
\[|I(\lambda)| \leq C'' (||\phi||_{L^{\infty}} + ||\nabla \phi ||_{L^{\infty} })m_n(\{x:  e^{-|\ln |x||^{k}} < |\lambda|^{-1}\}) \tag{3.19}\]
As in the one-dimensional case, the oscillatory integral is seen to be bounded by a constant times the maximal measure of the set for which the exponential $e^{i\lambda e^{-|\ln |x||^{k}}}$ is in a 
single period. 

\noindent {\bf Example 2.} Let $f(x) = e^{-(|x|^{-m})}$ for some $m > 0$. We proceed as in the last example. This time $g_{\omega}(s) = e^{-({ s^{-\frac{m}{n}}})}$ and we
have the  formulas
\[{\partial g_{\omega} \over \partial s }  = -\frac{m}{n} \frac{e^{-({ s^{-\frac{m}{n}}})}}{s^{{m \over n} + 1}} \tag{3.20a}\]
\[{\partial^2 g_{\omega} \over \partial s^2} =  \frac{m^2}{n^2} \frac{e^{-({ s^{-\frac{m}{n}}})}}{s^{2{m \over n} + 2}} + o\bigg(\frac{m^2}{n^2} \frac{e^{-({ s^{-\frac{m}{n}}})}}{s^{2{m \over n} + 2}}\bigg) \tag{3.20b} \]
\[{\partial^3 g_{\omega} \over \partial s^3}= -\frac{m^3}{n^3} \frac{e^{-({ s^{-\frac{m}{n}}})}}{s^{3{m \over n} + 3}} + o\bigg(\frac{m^3}{n^3} \frac{e^{-({ s^{-\frac{m}{n}}})}}{s^{3{m \over n} + 3}}\bigg) \tag{3.20c}\]
Equations $(3.20a)-(3.20c)$ show that on a small enough neighborhood of the origin, the functions ${\displaystyle {\partial g_{\omega} \over \partial s },
 {\partial^2 g_{\omega} \over \partial s^2}}$, and ${\displaystyle {\partial^3 g_{\omega} \over \partial s^3 }}$  are
 nonzero except at $s = 0$. So like in the last example, in each $s$ integral of $(3.15)$ we may apply Theorem 1.2 and the only endpoint term appearing is the $O(|\lambda|^{-1})$ term
from the right endpoint. After integrating the result in $\omega$, in analogy with $(3.17)$ we have
\[|I(\lambda)| \leq  C (||\phi||_{L^{\infty}} + ||\nabla \phi ||_{L^{\infty} })\bigg(\int_0^c \min\bigg(1,\frac{1}{ |\lambda| e^{-({ s^{-\frac{m}{n}}})}}\bigg)\, ds  + O(|\lambda|^{-1})\bigg)
 \tag{3.21}\]
Again the $O(|\lambda|^{-1})$ term will be negligible compared to the 
integral term, and like before due to the slow growth rate of the sublevel sets of $e^{-({ s^{-\frac{m}{n}}})}$, the integral is bounded by a constant times the measure of the set
where one takes the left component in the the minimum. As a result, in place of $(3.18)$ we get
\[|I(\lambda)| \leq  C' (||\phi||_{L^{\infty}} + ||\nabla \phi ||_{L^{\infty} })m(s: e^{-({ s^{-\frac{m}{n}}})} < |\lambda|^{-1}) \tag{3.22}\]
Unwrapping the measure on the right of $(3.22)$, we have that it equals the measure of the set where $-(s^{-\frac{m}{n}}) < - \ln |\lambda|$, or equivalently the set where 
$s^{-\frac{m}{n}} > \ln |\lambda|$, which is the same as $s < (\ln |\lambda|)^{-\frac{n}{m}}$. So the measure on the right-hand side of $(3.22)$ is $(\ln |\lambda|)^{-\frac{n}{m}}$. On the
other hand, the $n$ dimensional measure of the set where $f(x) =  e^{-(|x|^{-m})}$ satisfies $|f(x)| < |\lambda|^{-1}$ is the same as the measure of the set where 
$-(|x|^{-m}) < -\ln |\lambda|$, which is the same as the measure of the set where $|x|^{-m} > \ln |\lambda|$, or equivalently where $|x| < (\ln |\lambda|)^{-\frac{1}{m}}$. Since this is in
$n$ dimensions, the measure here is a constant times $ (\ln |\lambda|)^{-\frac{n}{m}}$, same as above. So in analogy with $(3.19)$, for large enough $|\lambda|$ we have the following,
 where once again $m_n$ denotes $n$-dimensional Lebesgue measure.
\[|I(\lambda)| \leq C'' (||\phi||_{L^{\infty}} + ||\nabla \phi ||_{L^{\infty} })m_n(\{x:  e^{-|x|^{-m}} < |\lambda|^{-1}\}) \tag{3.23}\]
These are the natural bounds for an oscillatory integral in terms of the sublevel set measures of its phase function.

\section{Efficiency of Theorems 1.1 and 1.2.}

 In this section we describe a few ways in which Theorems 1.1 and 1.2 can be viewed as efficient. For one, Theorem 1.1 minimizes the presence of endpoint terms that appear in integrations by parts for such one-dimensional oscillatory integrals, having just the one term on the right of $(1.4)$. 

Next, suppose we are in the setting of Theorem 1.1 and
$f'(x_0) = 0$ for some $x_0 \in [a,b]$. Let $p$ denote the order of the zero of $f'$ at $x_0$; by the assumptions of Theorem 1.1 we must have $1 \leq p
\leq k-1$ where $k$ is as in the theorem. Thus near $x_0$ one has $f'(x) = \beta(x - x_0)^p + o((x- x_0)^p)$ for some nonzero constant $\beta$. We examine
the effect this zero of $f'$ has on the right-hand side of $(1.3)$. Note that $f''(x) = \beta p (x - x_0)^{p-1} +  o((x- x_0)^{p-1})$.
 Thus near $x_0$ we have
\[{f''(x) \over  (f'(x))^2} = {p  \over \beta } {1 \over (x - x_0)^{p+1}} + o\bigg( {1 \over (x - x_0)^{p+1}}\bigg) \tag{4.1}\]
Thus as $|\lambda| \rightarrow \infty$ the portion of the integral term in $(1.4)$ near $x_0$ behaves as the following expression for a small but
 fixed $\delta > 0$.
\[\int_{|x - x_0| < \delta} \min\bigg(1,   {p  \over |\beta|} {1 \over |\lambda| |x - x_0|^{p+1}} \bigg) dx \tag{4.2}\]
Note that ${\displaystyle {p \over \beta} {1 \over |\lambda| |x - x_0|^{p+1}} = 1}$ when ${\displaystyle |x - x_0| =
\bigg({p \over \beta}\bigg)^{{1 \over p+1}} |\lambda|^{-{1 \over p+1}}}$. Thus as $|\lambda| \rightarrow \infty$, the integral $(4.2)$ is of the same order
as $|\beta|^{-{1 \over p + 1}} |\lambda|^{-{1 \over p + 1}}$. So the critical point at $x_0$ contributes 
$C(||\phi||_{L^{\infty}} + ||\phi'||_{L^1}) |\beta|^{-{1 \over p + 1}} |\lambda|^{-{1 \over p + 1}}$ to the right-hand side of $(1.4)$. This is the 
correct order of decay for the contribution this critical point makes
 to the oscillatory integral of $(1.4)$ as $\lambda \rightarrow \infty$, as long as $\phi(x_0) \neq 0$. Thus the right-hand side of $(1.4)$ properly
takes into account a critical point, as $|\lambda| \rightarrow \infty$. Although we still have the factor $||\phi||_{L^{\infty}} + ||\phi'||_{L^1}$,
if we replace the interval $[a,b]$ by an appropriately small interval containing $x$, the factor $||\phi||_{L^{\infty}} + ||\phi'||_{L^1}$ will 
more closely approximate the correct coefficient $|\phi(x_0)|$.

 We should point out that the above discussion also holds for the variant of Theorem 1.1 given by
Theorem 1.2.

Another way in which Theorem 1.1 can be viewed as efficient is in the context of the Van der Corput lemma. One version of the
 standard form of the Van der Corput lemma can be stated as follows (see p.334 of Chapter 8 of [St]). 

\noindent  {\bf Van der Corput Lemma.} {\it Suppose $p$ is a positive integer and $f(x)$ is a $C^{\max(p,2)}$ function on an interval $[a,b]$ such that
$|f^{(p)}(x)| > 1$ on $[a,b]$. If $p = 1$ also assume $f'$ is monotonic. Suppose $\phi(x)$ is a 
$C^1$ function on $[a,b]$. Then there is a constant $c_p > 0$ such that }
\[\bigg|\int_a^b e^{i \lambda f(x)}\phi(x)\,dx\bigg| \leq c_p (||\phi||_{L^{\infty}} + ||\phi'||_{L^1}) |\lambda|^{-{1 \over p}} \tag{4.3} \]
Sometimes, the Van der Corput Lemma is stated under the assumption that $|f^{(p)}(x)| > A$ for some $A > 0$ instead of $|f^{(p)}(x)| > 1$. In this case
one may replace $f(x)$ by $A f(x)$ in the above, and the resulting conclusion is that
\[\bigg|\int_a^b e^{i \lambda f(x)}\phi(x)\,dx\bigg| \leq c_p (||\phi||_{L^{\infty}} + ||\phi'||_{L^1}) A^{-{1 \over p}}|\lambda|^{-{1 \over p}} \tag{4.3'} \]
There is also a sublevel set measure version of the Van der Corput lemma with a very similar proof which can be found in [C]. It can be stated as follows.

\noindent  {\bf Van der Corput lemma II.}  {\it Suppose $p$ is a positive integer and $f(x)$ is a $C^p$ function on an interval $[a,b]$ such that
$|f^{(p)}(x)| > B$ on $[a,b]$, where $B > 0$. Then there is a constant $c_p' > 0$ such that for each $\epsilon > 0$ we have }
\[|\{x \in I: |f(x)| < \epsilon\}| < c_p' {\bigg({\epsilon \over B}\bigg)}^{{1 \over p}} \tag{4.4}\]
Another notion of Theorem 1.1 being efficient is given by the following theorem pertaining to the right hand side of $(1.4)$.

\begin{theorem} Suppose that $f(x)$ is a polynomial and $p \geq  1$ is such that
$|f^{(p)}(x)| > 1$ on $[a,b]$. Let $l$ denote the degree of $f(x)$. Then there is a positive constant $C_p$ such that 
\[\int_a^b\min\bigg(1, \bigg|{f''(x) \over \lambda (f'(x))^2}\bigg|\bigg) dx + \min\bigg( b - a, {1 \over |\lambda| \sup_{[a,b]}|f'(x)|}\bigg) \leq l C_p 
|\lambda|^{-{1 \over p}} \tag{4.5}\]
\end{theorem}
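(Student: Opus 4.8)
The plan is to bound the two terms on the left of $(4.5)$ separately, using only elementary manipulations together with the sublevel set form of the Van der Corput lemma (Van der Corput lemma II, i.e. inequality $(4.4)$) applied to $f'$. Note that the hypothesis $|f^{(p)}(x)| > 1$ says exactly that $|(f')^{(p-1)}(x)| > 1$ on $[a,b]$, so when $p \geq 2$, inequality $(4.4)$ with exponent $p-1$ gives $|\{x \in [a,b] : |f'(x)| < \epsilon\}| < c'_{p-1}\,\epsilon^{1/(p-1)}$ for all $\epsilon > 0$, while when $p = 1$ one simply has $|f'| > 1$ on $[a,b]$.

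For the endpoint term, set $M = \sup_{[a,b]}|f'|$. If $p = 1$ then $M \geq 1$, so the term is at most $|\lambda|^{-1} = |\lambda|^{-1/p}$. If $p \geq 2$, letting $\epsilon \downarrow M$ in the sublevel bound (and noting $\{|f'| < \epsilon\} = [a,b]$ for $\epsilon > M$) yields $b - a \leq c'_{p-1}M^{1/(p-1)}$, so the endpoint term is at most $\min(c'_{p-1}M^{1/(p-1)},\, (|\lambda|M)^{-1})$. Since the first quantity is increasing and the second decreasing in $M$, their minimum is largest where they are equal, namely at $M = (c'_{p-1}|\lambda|)^{-(p-1)/p}$, giving the bound $(c'_{p-1})^{(p-1)/p}|\lambda|^{-1/p}$, which is absorbed into $C_p$.

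For the integral term, split $(a,b)$ at the finitely many zeros in $[a,b]$ of $f'$ and of $f''$; since $\deg f' = l-1$ and $\deg f'' = l-2$ this gives at most $2l-2$ open subintervals $I_j$, and on each $I_j$ the polynomial $f'$ has constant sign and --- because $f'' = (f')'$ also has constant sign there --- is monotone. Replacing $f$ by $-f$ on the subinterval if necessary (which leaves $|f^{(p)}|$, the integrand, and $\deg f$ unchanged) we may assume $f' > 0$ on $I_j = (\alpha,\beta)$, so $1/f'$ is monotone there. It then suffices to show $\int_{I_j}\min(1, |f''|/(|\lambda|(f')^2))\,dx \leq C_p|\lambda|^{-1/p}$; summing over the at most $2l$ pieces finishes the integral term, and the degenerate cases $l \leq 1$ (where either $f'' \equiv 0$, so the integrand vanishes, or $|f^{(p)}| > 1$ is impossible) are immediate.

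To prove the per-subinterval bound, suppose first $p \geq 2$ and let $\gamma \in [\alpha,\beta]$ be the point with $f'(\gamma) = |\lambda|^{-(p-1)/p}$ (if $f'$ omits this value on $I_j$, one of the two pieces below is empty and the estimate only improves). On the portion of $I_j$ where $f' < |\lambda|^{-(p-1)/p}$ we use $\min(1,\cdot) \leq 1$ and bound its length by $|\{x \in [a,b] : |f'(x)| < |\lambda|^{-(p-1)/p}\}| < c'_{p-1}(|\lambda|^{-(p-1)/p})^{1/(p-1)} = c'_{p-1}|\lambda|^{-1/p}$ via $(4.4)$. On the portion where $f' \geq |\lambda|^{-(p-1)/p}$ we use $\min(1,\cdot) \leq |f''|/(|\lambda|(f')^2)$; there $|f''|/(f')^2$ equals $\pm(1/f')'$, so its integral over that portion telescopes to at most $1/f'(\gamma) = |\lambda|^{(p-1)/p}$, contributing at most $|\lambda|^{-1}\cdot|\lambda|^{(p-1)/p} = |\lambda|^{-1/p}$. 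When $p = 1$ we instead have $f' > 1$ throughout $I_j$, so directly $\int_{I_j}\min(1,\cdot)\,dx \leq \int_{I_j}|f''|/(|\lambda|(f')^2)\,dx \leq |\lambda|^{-1}$ by the same telescoping (since $1/f' \in (0,1)$). The one point that has to be found rather than merely checked is the splitting level $|\lambda|^{-(p-1)/p}$: it is exactly where the ``length $\times 1$'' estimate from the Van der Corput sublevel bound balances the ``telescoped $1/f'$'' estimate from integration by parts. Once this is noticed everything else is bookkeeping, with the pleasant feature that the single input $(4.4)$ for $f'$ governs both the endpoint term and the small-$f'$ parts of the integral term.
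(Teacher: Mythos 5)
Your argument is correct, and while your treatment of the endpoint term is essentially the paper's (both extract $\sup_{[a,b]}|f'| \gtrsim_p (b-a)^{p-1}$ from the sublevel Van der Corput lemma applied to $f'$ and then balance the two quantities in the minimum, yielding $(c'_{p-1})^{(p-1)/p}|\lambda|^{-1/p}$), your handling of the integral term takes a genuinely different route. The paper never splits $[a,b]$ into monotonicity intervals for $p>1$: it factors $f'(x)=\gamma\prod_i(x-r_i)$ over the complex roots, writes ${f''/(f')^2}=\sum_i 1/((x-r_i)f'(x))$ as a logarithmic derivative, uses $\min(a,b+c)\le\min(a,b)+\min(a,c)$ to reduce to one root at a time, proves by a dyadic decomposition in $|x-\mathrm{Re}(r_i)|$ combined with $(4.4)$ that $|\{x:|(x-r_i)f'(x)|<\epsilon\}|\le d_p\,\epsilon^{1/p}$, and then invokes Lemma 3.1 to convert that sublevel bound into $\int\min(1,|\lambda(x-r_i)f'|^{-1})\,dx\le e_p|\lambda|^{-1/p}$, summing over the $l-1$ roots. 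You instead cut $(a,b)$ at the real zeros of $f'$ and $f''$ into $O(l)$ intervals where $1/f'$ is monotone, split each at the level $|f'|=|\lambda|^{-(p-1)/p}$, control the small-$|f'|$ piece by the $(4.4)$ sublevel measure and the large-$|f'|$ piece by telescoping $\int|(1/f')'|\le |\lambda|^{(p-1)/p}$; the threshold is chosen exactly to balance the two. Your route is more elementary (no complex roots, no dyadic summation, no appeal to Lemma 3.1) and makes the balancing point transparent, at the cost of leaning on real-rootedness/monotonicity structure and a slightly larger combinatorial count ($\sim 2l$ pieces versus the paper's $l-1$ roots, both harmless since the factor $l$ is allowed); the paper's argument is more in the spirit of its general machinery, since the sublevel-to-oscillatory conversion of Lemma 3.1 is exactly what it uses again in Theorem 3.2, and it requires no decomposition into intervals where $f'$ and $f''$ have fixed sign.
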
 
Note that in the polynomial case, the $k$ in $(1.4)$ can be taken to
be the degree $l$ of $f(x)$. Thus
Theorem 4.1 can be viewed as saying that for polynomials, the estimate given by $(1.4)$ contains the information given by the Van der Corput lemma,
 other than the fact that the constant appearing may depend on the degree of the polynomial as well as $p$. This dependence gibes with the 
condition in Theorem 1.1 that $\sup_{[a,b]} |f^{(k)}(x)| < A \inf_{[a,b]} |f^{(k)}(x)|$.

\noindent{\bf Proof of Theorem 4.1.}

We first focus on the term $ \min( b - a, {1 \over |\lambda| \sup_{[a,b]}|f'(x)|}) $ in $(4.5)$. If $p = 1$, this term is bounded by ${1 \over |\lambda|}$, 
which gives the estimate we need. So we assume $p > 1$. We apply $(4.4)$ to $f'(x)$, specifically that $\{x \in [a,b]: |f'(x)| < {1 \over (c_{p-1}')^{p-1}}
(b-a)^{p-1}\}$ has measure less than $b - a$. Thus $\sup_{[a,b]}|f'(x)| \geq {1 \over (c_{p-1}')^{p-1}} (b-a)^{p-1}$. Hence we have
\[\min( b - a, {1 \over |\lambda| \sup_{[a,b]}|f'(x)|}) \leq \min(b -a, {(c_{p-1}')^{p-1} \over |\lambda| (b-a)^{p-1}}) \tag{4.6}\]
Viewed as functions of $b - a$ for fixed $\lambda$, the left term in the minimum is increasing and the right term is decreasing.
The two terms are equal when $b - a = (c_{p-1}')^{p-1 \over p}|\lambda|^{-{1 \over p}}$. Thus we have the desired estimate
\[\min( b - a, {1 \over |\lambda| \sup_{[a,b]}|f'(x)|}) \leq (c_{p-1}')^{p-1 \over p}|\lambda|^{-{1 \over p}} \tag{4.7}\]
We now move on to the integral term $\int_a^b\min\big(1, \big|{f''(x) \over \lambda (f'(x))^2}\big|\big) dx $ in $(4.5)$. Again we first do the case
 $p = 1$ separately. When $p = 1$, we have 
\[\int_a^b\min\bigg(1, \bigg|{f''(x) \over \lambda (f'(x))^2}\bigg|\bigg) dx \leq {1 \over |\lambda|} \int_a^b \bigg|{f''(x) \over  (f'(x))^2}\bigg| dx \tag{4.8}\]
Since $f(x)$ is a polynomial, $f'(x)$ is piecewise monotonic, and the integral in $(4.8)$ can be integrated over each piece to obtain the sum of $\pm 
{1 \over f'(x)}$ at the two endpoints. Thus by the condition that $|f'| > 1$, the integral over each piece less than $2$, and thus the overall integral is 
bounded by $2l$, where we recall $l$ is the degree of $f(x)$. Thus for the case where $p = 1$ we have the desired estimate
\[\int_a^b\min\bigg(1, \bigg|{f''(x) \over \lambda (f'(x))^2}\bigg|\bigg) dx \leq 2l {1 \over |\lambda|}\tag{4.9}\]
Henceforth we assume $p > 1$ and focus on bounding  $\int_a^b\min(1, |{f''(x) \over \lambda (f'(x))^2}|) dx $ when $p > 1$. Note that the degree $l$ of
the polynomial $f(x)$ is at least 2. Thus we may factorize $f'(x) = \gamma\prod_{i=1}^{l-1}(x - r_i)$, where the $r_i$ are the roots of $f'(x)$ and
$\gamma \neq 0$. Note that
${f''(x) \over f'(x)}$ is the logarithmic derivative of $f'(x)$, or $\sum_{i=1}^{l-1} {1 \over x - r_i}$, so that we have 
\[{f''(x) \over (f'(x))^2} = \sum_{i=1}^{l-1} {1 \over (x-r_i)f'(x)} \tag{4.10}\]
Due to the elementary inequality $\min(a,b+c) \leq \min(a,b) + \min(a,c)$ for nonnegative $a,b$, and $c$, we then get
\[\int_a^b\min\bigg(1, \bigg|{f''(x) \over \lambda (f'(x))^2}\bigg|\bigg) dx \leq \sum_{i=1}^{l-1} \int_a^b\min\bigg(1, {1 \over |\lambda (x-r_i)f'(x)|}\bigg) \tag{4.11}\]
By $(4.4)$, for each $\epsilon > 0$ one has
\[|\{x \in [a,b]: |f'(x)| < \epsilon\}| < c_{p-1}' {\epsilon}^{{1 \over p-1}} \tag{4.12}\]
This implies corresponding bounds for each $|\{x \in [a,b]: |(x - r_i) f'(x)| < \epsilon\}|$. To see why, note that 
\[\{x \in [a,b]: |(x - r_i)f'(x)| < \epsilon\} \subset \{x \in [a,b]: |(x - Re(r_i)) f'(x)| < \epsilon\}\]
\[\subset \cup_{j= -\infty}^{\infty} \{x \in [a,b]: |x - Re(r_i)| < 2^j, |f'(x)| < {\epsilon \over 2^j}\} \tag{4.13}\]
Note that $|\{x \in [a,b]: |x - Re(r_i)| < 2^j \}| < 2^{j+1}$, while by $(4.12)$ we have 
\[|\{x \in [a,b]: |f'(x)| < {\epsilon \over 2^j}\}|  \leq  c_{p-1}' {\epsilon}^{{1 \over p-1}}2^{-{j \over p -1}} \tag{4.14}\]
Thus $(4.13)$ gives
\[|\{x \in [a,b]: |(x - r_i)f'(x)| < \epsilon\}| \leq \sum_{j= -\infty}^{\infty} \min(2^{j+1}, c_{p-1}' {\epsilon}^{{1 \over p-1}}2^{-{j \over p -1}})\tag{4.15}\]
One computes that the terms $2^{j+1}$ and $c_{p-1}' {\epsilon}^{{1 \over p-1}}2^{-{j \over p -1}}$ in the minimum of $(4.15)$ are equal when 
$2^j = ({c_{p-1}' \over 2})^{p-1 \over p} \epsilon^{1 \over p}$. Since $2^j$ increases exponentially in $j$ and  $c_{p-1}' {\epsilon}^{{1 \over p-1}}2^{-{j \over p -1}}$ decreases exponentially in $j$, we conclude that for some constant $d_p$ we have
\[|\{x \in [a,b]: |(x - r_i)f'(x)| < \epsilon\}| \leq d_p \epsilon^{1 \over p}\tag{4.16}\]
Equation $(4.16)$ holds for all $\epsilon > 0$. Thus by Lemma 3.2, there is a constant $e_p$ such that for each $i$  we have
\[ \int_a^b\min\bigg(1, {1 \over |\lambda (x-r_i)f'(x)|}\bigg)\,dx \leq e_p|\lambda|^{-{1 \over p}}\tag{4.17}\]
In view of $(4.11)$, adding $(4.17)$ over all $i$ gives
\[\int_a^b\min\bigg(1, \bigg|{f''(x) \over \lambda (f'(x))^2}\bigg|\bigg) dx \leq (l-1)e_p |\lambda|^{-{1 \over p}}\tag{4.18}\]
This gives us the desired bound for the term $\int_a^b\min(1, |{f''(x) \over \lambda (f'(x))^2}|) dx $ in $(4.5)$ and we are done. \qed

\section{Proofs of Theorems 1.1 and 1.2.}

\subsection{Proof of Theorem 1.1.} We will do the case when $k = 1$ separately at the end. So for now we assume the hypotheses of Theorem 1.1 hold 
for some $k \geq 2$.  Let $Z$ be the set
$\{x \in [a,b]: x = a, x = b,$ or $f'(x) = 0\}$. Since the assumptions of the theorem imply that $|f^{(k)}(x)| \neq 0$ on $[a,b]$, the set $Z$ is finite.
We next write $[a,b] 
= G \cup H$, where $G = \{x \in [a,b]: x  \in Z$ or $\big|{f''(x) \over (f'(x))^2}\big| \geq 1\}$, and $H = \{x \in [a,b]: x \notin Z$ and 
$\big|{f''(x) \over (f'(x))^2}\big| < 1\}$. Note that $H$ is open and therefore $G$ is closed.
Since the integrand of $(1.1)$ has magnitude at most $||\phi||_{L^{\infty}}$, the portion of the integral $(1.1)$ over $G$
is bounded by $||\phi||_{L^{\infty}} |G|$, which equals
\[||\phi||_{L^{\infty}} \int_G\min\bigg(1, \bigg|{f''(x) \over (f'(x))^2}\bigg|\bigg) dx\tag{5.1}\]
As a result we have
\[\bigg|\int_G e^{i f(x)}\phi(x)\,dx\bigg| \leq ||\phi||_{L^{\infty}} \int_G\min\bigg(1, \bigg|{f''(x) \over (f'(x))^2}\bigg|\bigg) dx\tag{5.2}\]
This is the only bound we will need for the portion of the oscillatory integral over $G$.

We move on to $H$.
Since $H$ is open we may write $H = \cup_j I_j$ where $I_j = (c_j,d_j)$ are disjoint open intervals. By definition of $H$, one has
$f'(x) \neq 0$ for each $x \in I_j$. We claim that $f'(c_j)$ and $f'(d_j)$ are nonzero, so we have that ${1 \over f'(x)}$ is a $C^1$ function on $[c_j,d_j]$ for which we may apply 
integrations by parts such as in $(1.2)$.  To see why $f'(c_j)$ and $f'(d_j)$ are nonzero, first note that for small $\epsilon_1, \epsilon_2 > 0$ we have
\[{1 \over f'(d_j - \epsilon_2)} - {1 \over f'(c_j + \epsilon_1)} = - \int_{c_j + \epsilon_1}^{d_j - \epsilon_2} {f''(x) \over (f'(x))^2}\,dx\tag{5.3}\]
Since $\big|{f''(x) \over (f'(x))^2}\big| < 1$ on $(c_j,d_j)$, we must therefore have
\[\bigg|{1 \over f'(d_j - \epsilon_2)} - {1 \over f'(c_j + \epsilon_1)}\bigg| <  (d_j - c_j) \tag{5.4}\]
If $f'(c_j)$ were zero, we could take limits as $\epsilon_1 \rightarrow 0$ in $(5.4)$ and get a contradiction. Similarly if $f'(d_j)$ were zero we could take 
limits as $\epsilon_2 \rightarrow 0$ in $(5.4)$ and get a contradiction. Thus we have that $f'(c_j)$ and $f'(d_j)$ are nonzero as needed.

Next we divide the intervals $I_j$ into two types. We say $I_j$ is of type 1 if $\sup_{I_j} |f'(x)| > 2\inf_{I_j}|f'(x)|$, and we say $I_j$ is of type 2 if 
$\sup_{I_j}|f'(x)| \leq 2\inf_{I_j}|f'(x)|$. We start with the analysis on intervals of type $1$. We integrate by parts in the integral $\int_{c_j}^{d_j}
 e^{i f(x)} \phi(x)\,dx$, writing $e^{i f(x)} = if'(x) e^{if(x)} \times {1 \over i f'(x)}$, then integrating $if'(x) e^{if(x)}$ to $e^{if(x)}$ and differentiating the
rest. The result is
\[\int_{c_j}^{d_j} e^{i f(x)}\phi(x)\,dx = {e^{if(d_j)} \over i f'(d_j)}\phi(d_j) -  {e^{if(c_j)} \over i f'(c_j)}\phi(c_j) \]
\[ + {1 \over i} \int_{c_j}^{d_j}  e^{i f(x)} { f''(x) \over  (f'(x))^2}\phi(x)\,dx  -  {1 \over i} \int_{c_j}^{d_j} e^{i f(x)}{1 \over f'(x)}\phi'(x)\,dx  \tag{5.5}\]
We now bound $(5.5)$ by the absolute values of each term of $(5.5)$, putting the absolute values inside the integrals and bounding the occurrences of
$\phi$ by $||\phi||_{L^{\infty}}$. We obtain
\[\bigg |\int_{c_j}^{d_j} e^{i f(x)}\phi(x)\,dx\bigg| \leq ||\phi||_{L^{\infty}}\bigg({1 \over |f'(d_j)|} + {1 \over |f'(c_j)|} + \int_{c_j}^{d_j} \bigg|{ f''(x) \over  (f'(x))^2}\bigg|\,dx\bigg) + \int_{c_j}^{d_j} {1 \over |f'(x)|}|\phi'(x)|\,dx \tag{5.6}\]
To analyze $(5.6)$ further we use the following lemma.
\begin{lemma}
If $I_j = (c_j,d_j)$ is any type 1 interval, then each $y \in [c_j,d_j]$ satisfies
\[{1 \over |f'(y)|} < 3 \int_{c_j}^{d_j} \bigg|{ f''(x) \over  (f'(x))^2}\bigg|\,dx \tag{5.7}\]
\end{lemma}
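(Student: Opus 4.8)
The plan is to read the integral on the right of $(5.7)$ as the total variation of $1/f'$ on $[c_j,d_j]$ — since $(1/f'(x))' = -f''(x)/(f'(x))^2$ — and then feed in the defining inequality of a type 1 interval, namely $\sup_{I_j}|f'| > 2\inf_{I_j}|f'|$. The total variation dominates the oscillation $|1/f'(p)-1/f'(q)|$ for any two points $p,q$, and the type 1 condition forces that oscillation to be a definite fraction of $1/\inf_{I_j}|f'|$, which in turn dominates $1/|f'(y)|$ for every $y$.

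First I would record two preliminaries, both essentially already in hand. (i) Since $f'$ is continuous and nonvanishing on the closed interval $[c_j,d_j]$ — the nonvanishing at $c_j$ and $d_j$ having been proved just above the lemma — $f'$ has constant sign there, so $1/f'$ is a genuine $C^1$ function and $|1/f'| = 1/|f'|$. (ii) By continuity and compactness, $\inf_{I_j}|f'|$ and $\sup_{I_j}|f'|$ coincide with $m := \min_{[c_j,d_j]}|f'|$ and $M := \max_{[c_j,d_j]}|f'|$, attained at some $p,q \in [c_j,d_j]$. Then, assuming without loss of generality $q \le p$, I would write $|1/f'(p) - 1/f'(q)| = |\int_q^p (1/f'(x))'\,dx| \le \int_{c_j}^{d_j}|f''(x)/(f'(x))^2|\,dx$; since $f'$ has constant sign the left side equals $1/m - 1/M$. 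Next I would invoke $M > 2m$, so $1/M < 1/(2m)$ and hence $\int_{c_j}^{d_j}|f''/(f')^2|\,dx > 1/m - 1/(2m) = 1/(2m)$. Finally, for any $y \in [c_j,d_j]$ one has $1/|f'(y)| \le 1/m$, and since $3\cdot(1/(2m)) = (3/2)(1/m) > 1/m$, chaining the last two inequalities gives $1/|f'(y)| < 3\int_{c_j}^{d_j}|f''/(f')^2|\,dx$, which is $(5.7)$.

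I do not expect a genuine obstacle: the argument is a short chain of elementary inequalities. The only steps deserving a word of justification are the passage from sup/inf over the open interval $I_j$ to max/min over the closed interval $[c_j,d_j]$ (immediate from continuity) and the fact that $1/f'$ is $C^1$ up to the endpoints (which is exactly why the nonvanishing of $f'$ at $c_j$ and $d_j$ was established beforehand). It is worth noting that the hypothesis $|f''/(f')^2| < 1$ on $I_j$, which defines the set $H$, is not used in this lemma; only the type 1 condition and the nonvanishing of $f'$ enter.
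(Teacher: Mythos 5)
Your proof is correct and takes essentially the same route as the paper: both apply the fundamental theorem of calculus to $1/f'$ between points where $|f'|$ attains its extreme values and then feed in the type 1 condition $\sup_{I_j}|f'| > 2\inf_{I_j}|f'|$ to extract a lower bound on $\int_{c_j}^{d_j}|f''/(f')^2|\,dx$ of the form $1/(2m)$. The only difference is cosmetic—where the paper uses a second application of the fundamental theorem of calculus (from the minimizing point $x_0$ to $y$) and so ends with the constant $3$, you bound $1/|f'(y)|$ directly by $1/m$, which in fact shows the constant $3$ in $(5.7)$ could be replaced by $2$.
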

\begin{proof}
Let $x_0 \in [c_j,d_j]$ be such that $|f'(x_0)| = \inf_{[c_j,d_j]} |f'(x)|$. Recall by the earlier discussion that $f'(x_0) \neq 0$. Then by the fundamental theorem of
calculus, ${1 \over f'(y)} = {1 \over f'(x_0)} - \int_{x_0}^y {f''(x) \over (f(x))^2}\,dx$, so that
\[{1 \over |f'(y)|} \leq  {1 \over |f'(x_0)|} +  \int_{c_j}^{d_j} \bigg|{ f''(x) \over  (f'(x))^2}\bigg|\,dx \tag{5.8}\]
Now let $x_1 \in [c_j,d_j]$ be such $|f'(x_1)| = \sup_{[c_j,d_j]} |f'(x)|$. Since $I_j$ is type $1$, one has  $\displaystyle{{1 \over |f'(x_1)|} < {1 \over 2|f'(x_0)|}}$. As a result we have
\[ {1 \over |f'(x_0)|} < 2\, \bigg|{1 \over f'(x_0)}  - {1 \over f '(x_1)}\bigg| \tag{5.9}\]
Again using the fundamental theorem of calculus, $(5.9)$ leads to
\[ {1 \over |f'(x_0)|} < 2\, \int_{c_j}^{d_j} \bigg|{ f''(x) \over  (f'(x))^2}\bigg|\,dx \tag{5.10}\]
Inserting $(5.10)$ into $(5.8)$ gives $(5.7)$ and we are done.
\end{proof}

\noindent Next, we insert $(5.7)$ into the first two terms on the right of $(5.6)$, obtaining
\[\bigg |\int_{c_j}^{d_j} e^{i f(x)}\phi(x) \,dx\bigg| \leq 7\,||\phi||_{L^{\infty}} \int_{c_j}^{d_j} \bigg|{ f''(x) \over  (f'(x))^2}\bigg|\,dx 
 + \int_{c_j}^{d_j} {1 \over |f'(x)|}|\phi'(x)|\,dx \tag{5.11}\]
Inserting $(5.7)$ into the rightmost term of $(5.11)$ now gives
\[\bigg |\int_{c_j}^{d_j} e^{i f(x)}\phi(x) \,dx\bigg| \leq (7\,||\phi||_{L^{\infty}} + 3||\phi'||_{L^1})\int_{c_j}^{d_j} \bigg|{ f''(x) \over  (f'(x))^2}\bigg|
\,dx \tag{5.12}\] 
Since $\big|{ f''(x) \over  (f'(x))^2}\big| < 1$ on $(c_j,d_j)$, the $\big|{ f''(x) \over  (f'(x))^2}\big|$ appearing in $(5.12)$ is equal to
 $\min(1, \big|{ f''(x) \over  (f'(x))^2}\big|)$. As a result, $(5.12)$ can be rewritten as
\[\bigg |\int_{c_j}^{d_j} e^{i f(x)}\phi(x) \,dx\bigg| \leq  (7\,||\phi||_{L^{\infty}} + 3||\phi'||_{L^1})\int_{c_j}^{d_j}
 \min\bigg(1,\bigg|{ f''(x) \over  (f'(x))^2}\bigg|\bigg)\,dx \tag{5.12'}\] 
The estimate $(5.12')$ gives the bounds we need for the portion of the oscillatory integral over $(c_j,d_j)$ when it is an interval of type 1.

We now assume $(c_j,d_j)$ is an interval of type 2. We do the case where $(c_j,d_j)$ is the entire interval $(a,b)$ first. This is the only situation where we 
contribute to the right-hand term of $(1.3)$. We integrate by parts as in $(5.5)$, and once again have the estimate $(5.6)$. Instead of using $(5.7)$, 
this time we use
the fact that $(a,b)$ is an interval of type 2 and thus have a bound ${1 \over |f'(x)|} \leq 2{ 1\over \sup_{[a,b]} |f'|}$ holding on $[a,b]$. We insert this 
bound into $(5.6)$, both in the two endpoint terms and in the integral of the rightmost term. We get
\[\bigg |\int_{a}^{b} e^{i f(x)}\phi(x) \,dx\bigg| \leq( 4\,||\phi||_{L^{\infty}} + 2||\phi'||_{L^1}) { 1\over \sup_{[a,b]} |f'(x)|} + ||\phi||_{L^{\infty}} 
\int_{a}^{b} \bigg|{ f''(x) \over  (f'(x))^2}\bigg|\,dx \] 
\[\leq ( 4\,||\phi||_{L^{\infty}} + 2||\phi'||_{L^1})\bigg( { 1\over \sup_{[a,b]} |f'(x)|} + 
\int_{a}^{b} \bigg|{ f''(x) \over  (f'(x))^2}\bigg|\,dx \bigg) \tag{5.13} \]
Since $\min(1, \big|{ f''(x) \over  (f'(x))^2}\big|) =  \big|{ f''(x) \over  (f'(x))^2}\big|$ in the situation at hand, the right-hand side of $(5.13)$ is the same as
\[  ( 4\,||\phi||_{L^{\infty}} + 2||\phi'||_{L^1})\bigg( { 1\over \sup_{[a,b]} |f'(x)|} + 
\int_{a}^{b} \min\bigg(1, \bigg|{ f''(x) \over  (f'(x))^2}\bigg|\bigg)\,dx \bigg)\tag{5.14}\] 
The estimate $(5.14)$ gives the bounds we need for the portion of the oscillatory integral over $(c_j,d_j)$ if $(c_j,d_j) =  (a,b)$ is an interval of type 2.

Next we assume that $(c_j,d_j)$ is an interval of type 2, and $(c_j,d_j)$ is not the whole interval $(a,b)$. Thus there is at least one endpoint $e$ of $[c_j,d_j]$
which is neither $a$ nor $b$. Since ${|f''(x)| \over (f'(x))^2} < 1$ on $(c_j,d_j)$ and $f'(e) \neq 0$, we must have ${|f''(e)| \over (f'(e))^2} \leq 1$. We
cannot have a strict inequality ${|f''(e)| \over (f'(e))^2} < 1$; if this were true
then $x$ would have to be an interior point of $(c_j,d_j)$ instead of an endpoint. So we must have  ${|f''(e)| \over (f'(e))^2} = 1$.

Since for all $x \in [c_j,d_j]$ one has
${|f''(x)| \over (f'(x))^2} \leq 1$ and $|f'|$ varies by a factor of at most $2$ on an interval of type 2, if $x \in [c_j,d_j]$ we therefore  have
\[ |f''(x)| \leq |f'(x)|^2 \leq 4|f'(e)|^2 = 4|f''(e)| \]
Hence we have 
\[|f''(e)| \geq {1 \over 4} \sup_{ [c_j,d_j]}|f''| \tag{5.15}\]
Next, we use the condition that $\sup_{[a,b]} |f^{(k)}(x)| < A \inf_{[a,b]} |f^{(k)}(x)|$ for some constant $A$, where we recall we are assuming 
$k \geq 2$. First suppose $k \geq 3$. Then we may apply part 2 of Lemma 1 of the famous paper [PhSt] of Phong and Stein to $f''(x)$ and conclude that
 there is a constant $L_{A,k}$ such that on $[c_j,d_j]$ we have the following, where $|I_j|$ denotes the length $d_j - c_j$ of $|I_j|$.
\[ |f'''(x)| \leq L_{A,k} {1 \over |I_j|} \sup_{[c_j,d_j]}|f''| \tag{5.16}\]
In view of $(5.15)$, equation $(5.16)$ implies that there is a constant $M_{A,k}$ such that if $x \in [c_j,d_j]$ satisfies $|x - e| < M_{A,k}|I_j|$, then one has
\[|f''(x)| \geq {1 \over 8} \sup_{[c_j,d_j]}|f''| \tag{5.17}\]
In particular, for such $x$ one has $|f''(x)| \geq {1 \over 8} |f''(e)| = {1 \over 8}|f'(e)|^2 \geq {1 \over 32}|f'(x)|^2$. The final inequality follows from
the fact that $(c_j, d_j)$ is an interval of type 2, so that $|f'|$ varies by a factor of at most 2 on it.
We conclude that for $|x - e| < M_{A,k}|I_j|$ one has
\[{|f''(x)| \over (f'(x))^2} \geq {1 \over 32} \tag{5.18}\]
Now on $I_j$, $\min(1, {|f''(x)| \over (f'(x))^2}) = {|f''(x)| \over (f'(x))^2}$. Thus $(5.18)$ implies that for some constant $N_{A,k} > 0$ one has
\[\int_{I_j} \min\bigg(1, {|f''(x)| \over (f'(x))^2}\bigg)\,dx >N_{A,k}|I_j| \tag{5.19}\]
The above assumed that $k = 3$, but $(5.19)$ will also hold when $k = 2$. For when $k = 2$, not only does $|f'(x)|$ vary by a factor of at most 2 on $I_j$,
but the condition $\sup_{[a,b]} |f''(x)| < A \inf_{[a,b]} |f''(x)|$ implies that $|f''(x)|$ varies by a factor of at most $A$ on $I_j$.  Thus 
${|f''(e)| \over (f'(e))^2} = 1$ implies that ${|f''(x)| \over (f'(x))^2}$ is bounded below by ${1 \over 4A}$ and once again $(5.19)$ will hold.

Next, note that by simply taking absolute values on the inside and integrating, we have
\[\bigg |\int_{c_j}^{d_j} e^{i f(x)}\phi(x)\,dx\bigg| \leq ||\phi||_{L^{\infty}}|I_j|\tag{5.20}\]
Combining this with $(5.19)$ we get that
\[\bigg |\int_{c_j}^{d_j} e^{i f(x)}\phi(x)\,dx\bigg| \leq (N_{A,k})^{-1}||\phi||_{L^{\infty}} \int_{I_j} \min\bigg(1, {|f''(x)| \over (f'(x))^2}\bigg)\,dx 
\tag{5.21}\]
Equation $(5.21)$ is what we need for the portion of the oscillatory integral over $I_j$. 

We now simply add the estimate $(5.2)$ for $G$ to the estimates $(5.12')$ and  $(5.14)$ or $(5.21)$ over all intervals $I_j$ of $H$, and we obtain $(1.3)$.
(One can take the minimum with $b - a$ in the right-hand term of $(1.3)$ since that is given by the bound 
one obtains by simply taking absolute values in the integrand of $\int_a^b e^{i f(x)}\phi(x)\,dx$ and integrating.) This completes 
the proof of Theorem 1.1 for $k > 1$. 

Lastly, we consider the case where $k = 1$. We write $[a,b]$ as the finite union of $j$ intervals $J_i = [c_i,d_i]$, disjoint except at endpoints, such that on 
each $J_i$, $f'(x)$ is monotonic. Then by $k = 1$ case of the Van der Corput lemma $(4.3')$, for some constant $c$ one has
\[\bigg|\int_{c_i}^{d_i}e^{i \lambda f(x)}\phi(x)\,dx\bigg| \leq c (||\phi||_{L^{\infty}} + ||\phi'||_{L^1}) {1 \over \inf_{[c_i,d_i]} |f'(x)|} \tag{5.22}\]
By the condition that $\sup_{[a,b]} |f'(x)| < A \inf_{[a,b]} |f'(x)|$, we see that $(5.22)$ is bounded by
\[ c A (||\phi||_{L^{\infty}} + ||\phi'||_{L^1}) {1 \over \sup_{[a,b]} |f'(x)|} \tag{5.23}\]
Adding this over all $i$ gives the desired inequality
\[\bigg|\int_a^b e^{i \lambda f(x)}\phi(x)\,dx\bigg| \leq  c j A (||\phi||_{L^{\infty}} + ||\phi'||_{L^1}) {1 \over \sup_{[a,b]} |f'(x)|} \tag{5.24}\]
This completes the proof for the $k = 1$ case; one can take the minimum with $b - a$ in the right-hand term of $(1.3)$ exactly as before. This completes the proof of Theorem 1.1.

\subsection{Proof of Theorem 1.2.} 

Suppose the assumptions of Theorem 1.2 hold. We let $\alpha_1 < ... < \alpha_N$ denote the points in $J$, where $J$ is as in Theorem 1.2. We will show 
that there is a constant $B$ such that for each $l < N$ we have
\[\bigg|\int_{\alpha_l}^{\alpha_{l+1}} e^{i f(x)}\phi(x)\,dx\bigg| \leq \]
\[ B (||\phi||_{L^{\infty}} + ||\phi'||_{L^1})\bigg(\int_{\alpha_l}^{\alpha_{l+1}}\min\bigg(1, \bigg|{f''(x) \over (f'(x))^2}\bigg|\bigg) dx + 
{1 \over |f'(\alpha_{l})|} + {1 \over |f'(\alpha_{l+1})|} \bigg) \tag{5.25}\]
Then $(1.6)$ follows by adding $(5.25)$ over all $l$; as in the proof of Theorem 1.1 one can take the minimum with $b - a$ in the right-hand term of $(1.6)$ since that is given by the bound
one obtains by simply taking absolute values in the integrand of $\int_a^b e^{i f(x)}\phi(x)\,dx$ and integrating.

We proceed to the proof of $(5.25)$. We start as in the proof of Theorem 1.1, replacing the interval $[a,b]$ by $[\alpha_l, \alpha_{l+1}]$. Namely we
 let $Z$ be the set
$\{x \in [\alpha_l, \alpha_{l+1}]: x = \alpha_l, x = \alpha_{l+1},$ or $f'(x) = 0\}$. Since $f'''(x) \neq 0$ on $(\alpha_l, \alpha_{l+1})$, the set $Z$ is finite.
Analogous to before, we write $[\alpha_l, \alpha_{l+1}]
= G \cup H$, where $G = \{x \in [\alpha_l, \alpha_{l+1}]: x  \in Z$ or $\big|{f''(x) \over (f'(x))^2}\big| \geq 1\}$, and $H = \{x \in [a,b]: x \notin Z$ and 
$\big|{f''(x) \over (f'(x))^2}\big| < 1\}$. Precisely as in the proof of Theorem 1.1, $(5.2)$ holds, providing the desired estimate for the integral over $G$.

Next, like in the proof of Theorem 1.1 we write $H = \cup_j I_j$, 
where $I_j  = (c_j,d_j)$ are open intervals. Exactly as in the proof of Theorem 1.1 we have that $f'(x) \neq 0$ on each $[c_j,d_j]$. We again divide the 
intervals $I_j$ into type 1 and type 2 intervals, where type 1 intervals are those on which $|f'(x)|$ varies by more than a factor of 2 and type 2 intervals are 
those on which $|f'(x)|$ varies by at most a factor of 2. The analysis of the type 1 intervals is exactly as in the proof of Theorem 1.1, leading to $(5.12')$ 
once again holding.

For type 2 intervals, the argument changes from that of Theorem 1.1, and accounts for the presence of the right-hand terms in $(5.25)$. We further subdivide
the type 2 intervals into two subtypes. We say $I_j$ is an interval of subtype 2A if $c_j = \alpha_l$, $d_j = \alpha_{l+1}$, or both. We say $I_j$ is an interval
of subtype 2B if neither endpoint of $I_j$ is an endpoint of $[\alpha_l, \alpha_{l+1}]$. We first suppose $I_j$ is an interval of type 2A. Thus we may let 
$e$ denote an 
endpoint of $[c_j,d_j]$ that is also an endpoint of $[\alpha_l, \alpha_{l+1}]$.  Since $I_j$ is an interval of type 2, $|f'(x)|$ varies by a factor of at most 2 on $[\alpha_l, \alpha_{l+1}]$. As a result, if $x \in I_j$ we have
\[{1 \over |f'(x)|} \leq {2 \over |f'(e)|} \tag{5.26}\]
We now perform the integration by parts leading to $(5.6)$. We insert $(5.26)$ into $(5.6)$ analogously to the argument leading to $(5.14)$. Analogously
to $(5.14)$, we  obtain
\[\bigg |\int_{c_j}^{d_j} e^{i f(x)}\phi(x) \,dx\bigg| \leq (4||\phi||_{L^{\infty}} + 2||\phi'||_{L^1})\bigg( { 1\over |f'(e)|} + 
\int_{c_j}^{d_j} \min\bigg(1, \bigg|{ f''(x) \over  (f'(x))^2}\bigg|\bigg)\,dx \bigg) \tag{5.27}\] 
This is the estimate we need for the integral over an $I_j$ of type 2A.

We now consider the case where $I_j$ is an interval of type 2B. Since neither $c_j$ nor $d_j$ is an endpoint of $[\alpha_l, \alpha_{l+1}]$ nor a point
where $f' = 0$, by the definition of $H$ one must have that
\[{|f''(c_j)| \over (f'(c_j))^2} = {|f''(d_j)| \over (f'(d_j))^2} = 1 \tag{5.28}\]
The $\alpha_l$ include all points where $f''$ or $f'''$ are zero and $f'$ is nonzero. Because $f'$ is nonzero on $[c_j,d_j]$, this means that
 both $f''$ and $f'''$ are either positive on $(c_j, d_j)$ or negative on $(c_j,d_j)$. Thus
$f'$ and $f''$ are monotonic on $[c_j,d_j]$. Hence there are endpoints $e_1$ and $e_2$ of $[c_j,d_j]$ (which may or may not be the same) such that
$|f'(x)| \leq |f'(e_1)|$ and $|f''(x)| \geq |f''(e_2)|$ for $x \in I_j$. Thus on $I_j$ we have
\[{|f''(x)| \over (f'(x))^2} \geq {|f''(e_2)| \over (f'(e_1))^2} \tag{5.29}\]
But recall $|f'(x)|$ varies by a factor of at most 2 on $I_j$. Thus we have
\[{|f''(e_2)| \over (f'(e_1))^2} \geq {1 \over 4} {|f''(e_2)| \over (f'(e_2))^2}\]
\[= {1 \over 4} \tag{5.30}\]
Thus on $I_j$ we have
\[{|f''(x)| \over (f'(x))^2} \geq {1 \over 4} \tag{5.31}\]
Since  $\min(1, {|f''(x)| \over (f'(x))^2} ) = \big|{f''(x) \over (f'(x))^2}\big|$ on $I_j$, analogously to $(5.19)$ we have
\[\int_{I_j} \min\bigg(1, {|f''(x)| \over (f'(x))^2}\bigg)\,dx >{1 \over 4} |I_j| \tag{5.32}\]
Exactly as in $(5.20)$, by taking absolute values on the inside and integrating one has 
\[\bigg |\int_{c_j}^{d_j} e^{i f(x)}\phi(x)\,dx\bigg| \leq ||\phi||_{L^{\infty}}|I_j|\tag{5.33}\]
Combining $(5.32)$ and $(5.33)$ we thus have
\[\bigg |\int_{c_j}^{d_j} e^{i f(x)}\phi(x)\,dx\bigg| \leq 4 ||\phi||_{L^{\infty}} \int_{I_j} \min\bigg(1, {|f''(x)| \over (f'(x))^2}\bigg)\,dx 
\tag{5.34}\]
Equation $(5.34)$ is the estimate we need for the portion of the oscillatory integral over an $I_j$ of type 2B. We can now add up the estimates for the intervals
of different types, namely $(5.12')$, $(5.27)$, and $(5.34)$, and add the result to the estimate $(5.2)$ for the integral over $G$. The result is $(5.25)$. This
concludes the proof of Theorem 1.2.

\section{Proof of Theorem 2.1.}

Assume that the conditions of Theorem 2.1 are satisfied.
 If $\partial_{x_n}^k g(0) \neq 0$ for some $k$ the theorem is immediate, so we assume that $\partial_{x_n}^k g(0) = 0$
 for each $k$. Equivalently, $g(0,...,0,x_n) = 0$ for each $x_n$. We Taylor expand $g(x)$ on a neighborhood of the origin as
\[ g(x) = \sum_{i=0}^{\infty} g_i(x_1,...,x_{n-1})x_n^i \tag{6.1}\]
Note that each $g_i(x)$ satisfies $g_i(0) = 0$. Thus the ideal $J$ in $\R[[x_1,...,x_{n-1}]]$ generated by all of the $g_i(x)$ is a proper ideal. Note that the ideals $J_i = \langle g_0,...,g_i \rangle$ satisfy the ascending chain condition and their union is $J$. So since 
$\R[[x_1,...,x_{n-1}]]$ is Noetherian, for some $i_0$ one has $J_i = J_{i_0}$ for all $i \geq i_0$ and therefore $J = J_{i_0}$.

We now apply resolution of singularities in $n-1$ dimensions to the nonzero $g_0,...,g_{i_0}$ as well as all nonzero differences $g_i - g_{i'}$ for 
$0 \leq i, i' \leq
i_0$. Hironaka's theorem in [H1] [H2] more than 
suffices for our purposes. As a consequence of these theorems, we may say the following. If $s > 0$ is sufficiently small, then the $n-1$ dimensional closed ball
${\bar B}_{n-1}(0,s)$ can be written as $\cup_{j=1}^N K_j$, where $K_j$ are (overlapping) compact sets containing the origin such that to each 
$K_j$ there is a $U_j$ containing $K_j$ such that the following hold.
\begin{itemize}
\item $\cup_{j=1}^N U_j \subset B_{n-1}(0,s')$ for some $s' > s$ such that $g(x)$ is real analytic on a neighborhood of the closed ball 
$\bar{B}_{n-1}(0,s')$.
\item There is a bounded open $U_j' \subset \R^{n-1}$ containing the origin  and a surjective $\phi_j: U_j' \rightarrow U_j$ whose components are 
real analytic functions 
such that each nonzero $g_i \circ \phi_j(y)$ and each nonzero $(g_i - g_{i'}) \circ \phi_j (y)$ for $0 \leq i, i' \leq i_0$, can be written in the form $a(y)m(y)$
on $U_j'$, where $m(y)$ is a nonconstant monomial and $a(y)$ is a nonvanishing real analytic function.
\item There is a compact $L_j \subset U_j'$ such that $\phi_j(L_j) = K_j$.
\end{itemize}

Next, we define $\bar{\phi}_j(y_1,...,y_n) = (\phi_j(y_1,...,y_{n-1}), y_n)$. 
We examine each $g \circ \bar{\phi}_j(y)$ as a function of $n$ variables on $U_j' \times [-t,t]$, where $t$ is small enough to ensure that each
$g \circ \bar{\phi}_j(y)$ is defined on $U_j' \times [-t,t]$. We may write
\[ g \circ \bar{\phi}_j (y) = \sum_{i=0}^{\infty} g_i \circ \phi_j (y_1,...,y_{n-1})y_n^i \tag{6.2}\]
Let $y_0 = (y_0',0)$ be any point in $L_j \times \{0\}$ such that $\bar{\phi}_j(y_0) = 0$. Equivalently, $\phi(y_0') = 0$. We shift coordinates in the $y$
 variables so that $y_0$ becomes the origin. Namely, we let
 $y  = z+ y_0$. Then  $g_i \circ \phi_j(y_1,...,y_{n-1})$ becomes $g_i \circ \phi_j((z_1,...,z_{n-1})+ y_0')$, which we denote by $h_{ij}(z_1,...,z_{n-1})$.
 We then have
\[ g \circ \bar{\phi}_j(z + y_0) = \sum_{i=0}^{\infty} h_{ij} (z_1,...,z_{n-1})z_n^i \tag{6.3}\]
Because  $\bar{\phi}_j(y_0) = 0$ and each $g_i(0) = 0$, we must have that each $h_{ij}(0) = 0$. Furthermore, each nonzero 
$h_{ij}(z)$ and each nonzero
difference $h_{ij}(z) - h_{i'j}(z)$ for $i, i' \leq i_0$  is still of the form $a(z)m(z)$ on a neighborhood of the origin, where $m(z)$ is nonvanishing and $a(0) \neq 0$. In addition,
since $g_0,...,g_{i_0}$ generate the ideal generated by all $g_i$ on a neighborhood of the origin, given $j$ we also have that $h_{0j},...,h_{i_0 j}$ generate the 
ideal generated by all $h_{ij}$.

Next, we write a nonzero $h_{ij}(z)$ in the form  $a_{ij}(z)m_{ij}(z)$, where $a_{ij}(z)$ is nonvanishing near the origin and $m_{ij}(z)$ is a monomial $z_1^{\alpha_{ij1}}...
z_{n-1}^{\alpha_{ij\,n-1}}$. Because we monomialized the nonzero differences $h_{ij} - h_{i'j}$ for $i, i' \leq i_0$, for a given $j$ we have that for each $i, i' \leq i_0$ we either have
$\alpha_{ij} \leq \alpha_{i'j}$ or $\alpha_{i'j} \leq \alpha_{ij}$. Thus given $j$ there is at least one $i_1 \leq i_0$ such that 
for all $i \leq i_0$ we have $\alpha_{i_1j} \leq \alpha_{ij}$. As a result, $m_{i_1 j}(z)$ divides $m_{ij}(z)$ for all $i \leq i_0$. Thus each $h_{ij}(z)$ 
for $i \leq i_0$ can be written as $m_{i_1 j}(z) q_{ij}(z)$ for a real analytic function $q_{ij}(z)$. Since $h_{0j},...,h_{i_0j}$ generate the ideal generated by all 
$h_{ij}$, for our fixed $j$ each $h_{ij}(z)$ for $i > i_0$ can also be written as $m_{i_1 j}(z) q_{ij}(z)$ for some real analytic function $q_{ij}(z)$. 

In summary, given $j$ one has $h_{i_1j}(z) = a_{i_1j}(z)m_{i_1j}(z)$ where $a_{i_1j}(0) \neq 0$, while for all $i \neq i_1$ we have that 
$h_{ij}(z) =  q_{ij}(z) m_{i_1 j}(z)$ for a real analytic function $q_{ij}(z)$. Thus in view of $(6.3)$, we have
that $\partial_{z_n}^{i_1}  (g \circ \bar{\phi}_j(z + y_0))$ is of the form $r(z_1,...,z_n) m_{i_1 j}(z_1,...,z_{n-1})$ for some real analytic $r(z)$ with $r(0) \neq 0$.
Write $q(z) = g \circ \bar{\phi}_j(z + y_0)$.
Hence if we are in a small enough neighborhood $V$ of the origin so that $|r(z_1,...,z_n)|$ is with a factor of $2$ of $r(0,...,0)$, then for $z \in V$ we 
either have that $q (z)$ is identically zero on the vertical line containing $z$ (corresponding to 
$m_{i_1 j}(z_1,...z_{n-1}) = 0)$, or for $l_j = i_1$ one has
\[ {1 \over 2} |\partial_{z_n}^{l_j} q(z_1,...,z_{n-1},0)| <  |\partial_{z_n}^l q(z_1,...,z_n)| <  2|\partial_{z_n}^{l_j} q(z_1,...,z_{n-1},0)|\tag{6.4} \]
Since $y_0$ was an arbitrary point of $\bar{\phi}_j^{-1}(0) \cap (L_j \times \{0\})$, a compact subset of $L_j \times \{0\}$, we may cover 
$\bar{\phi}_j^{-1}(0)  \cap (L_j \times \{0\})$ by finitely many balls $V_{jl}$ on which $(6.4)$ holds for each $l$ except on vertical lines where 
$q$ is identically zero, and such 
that $\cup_l V_{jl} \subset U_j' \times [-t,t]$ for our small $t$ that ensures $g \circ \bar{\phi}_j(y)$ is well-defined. There is an
open set $W_j$ containing $\phi_j^{-1}(0)  \cap L_j$ and an $\epsilon_j \in (0,t)$ such $W_j \times [-\epsilon_j, \epsilon_j] \subset \cup_l V_{jl}$.
Thus $(6.4)$ holds on $W_j \times [-\epsilon_j, \epsilon_j]$.

If $y$ is in $((W_j)^c \cap L_j) \times [-\epsilon_j, \epsilon_j]$, then $|\bar{\phi}_j(y)| > 0$ since
 ${\phi_j}^{-1}(0) \cap L_j$ is a subset of $W_j$.
 Since $((W_j)^c \cap L_j) \times [-\epsilon_j, \epsilon_j]$ is compact, there is in fact 
a $\delta_j > 0$ such that $|\bar{\phi}_j(y)| \geq \delta_j$ on $((W_j)^c \cap L_j) \times [-\epsilon_j, \epsilon_j]$. As a result, 
$\bar{\phi}_j^{-1}(B(0, \delta_j)) \cap (L_j \times [-\epsilon_j,\epsilon_j]) \subset W_j \times [-\epsilon_j, \epsilon_j]$, a set on which $(6.4)$ holds except on vertical lines on which $q$ is identically zero. Since $(6.4)$ is invariant under coordinate changes in the first $n - 1$ variables such as $\phi_j$, we then
have that the corresponding statement to $(6.4)$ holds on $B(0,\delta_j) \cap (\phi_j(L_j) \times [-\epsilon_j, \epsilon_j])$.  Namely, on this set, on a vertical
line either $g$ is identically zero or one has
\[ {1 \over 2} |\partial_{x_n}^{l_j} g(x_1,...,x_{n-1},0)| <  |\partial_{x_n}^l g(x_1,...,x_n)| <  2|\partial_{x_n}^{l_j}g(x_1,...,x_{n-1},0)|\tag{6.5} \]
Since $\phi_j(L_j) = K_j$, if we let $\eta < \delta_j, \epsilon_j$
for all $j$, then by taking the union of the above over all $j$ we see that on $B(0,\eta) \cap ((\cup_j K_j) \times [-\eta, \eta])$,  on a vertical
line either $g$ is identically zero or one has that $(6.5)$ holds for at least one $l_j$ . Since $\cup_j K_j$ is the original closed ball $\bar{B}_{n-1}(0,s)$ on
which we performed the resolution of singularities, there is an $\eta'$ for which the same is true on
all of $B_{n-1}(0,\eta') \times [-\eta',\eta']$ as is needed.

As for the final statement of Theorem 2.1, as long as $g(x)$ is not identically zero, there is some $m$ such that $\partial_{x_n}^m g(x_1,...,x_{n-1},0)$
is not identically zero on $B_{n-1}(0,\eta')$. Then $\{(x_1,...,x_{n-1}) \in B_{n-1}(0,\eta'): \partial_{x_n}^m g(x_1,...,x_{n-1},0) = 0\}$ has measure zero.
Since the set of $(x_1,...x_{n-1})$ where $g(x_1,...,x_n) = 0$ for all $|x_n| < \eta'$ is a subset of the above set, this set too has measure zero.
This completes the proof of Theorem 2.1.

\section{References}

\noindent [AGuV] V. Arnold, S. Gusein-Zade, A. Varchenko, {\it Singularities of differentiable maps},
Volume II, Birkhauser, Basel, 1988. \parskip = 4pt\baselineskip = 3pt

\noindent [BaGuZhZo] S. Basu, S. Guo, R. Zhang, P. Zorin-Kranich, {\it A stationary set method for estimating oscillatory integrals}, to appear, J. Eur. Math. Soc. 

\noindent [CaCWr] A. Carbery, M. Christ, J. Wright, {\it Multidimensional van der Corput and sublevel set estimates}, J. Amer. Math. Soc. {\bf 12}
 (1999), no. 4, 981-1015. 

\noindent [C] M. Christ, {\it Hilbert transforms along curves. I. Nilpotent groups}, Annals of Mathematics (2) {\bf 122} (1985), no.3, 575-596.

\noindent [ClMi] R. Cluckers and D. Miller, {\it Bounding the decay of oscillatory integrals with a constructible amplitude function and a globally subanalytic phase function}, J. Fourier Anal. Appl. {\bf 22} (2016), no. 1, 215-236. 

\noindent [Gi] M. Gilula, {\it Some oscillatory integral estimates via real analysis}, Math. Z. {\bf 289} (2018), no. 1-2, 377-403.

\noindent [G] J. Green, {\it Lower bounds on $L^p$ quasi-norms and the uniform sublevel set problem}, Mathematika {\bf 67} (2021), no. 2, 296-323.

\noindent [Gr] P. Gressman, {\it Scalar oscillatory integrals in smooth spaces of homogeneous type}, 
Rev. Mat. Iberoam. {\bf 31} (2015), no. 1, 215–244. 

\noindent [H1] H. Hironaka, {\it Resolution of singularities of an algebraic variety over a field of characteristic zero I}, 
 Ann. of Math. (2) {\bf 79} (1964), 109-203.

\noindent [H2] H. Hironaka, {\it Resolution of singularities of an algebraic variety over a field of characteristic zero II},  
Ann. of Math. (2) {\bf 79} (1964), 205-326. 

\noindent [Mi] D. J. Miller, {\it A preparation theorem for Weierstrass systems}, Trans. Amer. Math. Soc. {\bf 358} (2006), no. 10, 4395-4439.

\noindent [PhSt] D. H. Phong, E. M. Stein, {\it The Newton polyhedron and
oscillatory integral operators}, Acta Mathematica {\bf 179} (1997), 107-152.

\noindent [PhStS]  D. H. Phong, E. M. Stein, J. Sturm,  {\it On the growth and stability of real-analytic functions}, Amer. J. Math. {\bf 121} (1999), 
519-554.

\noindent [St] E. M. Stein, {\it Harmonic analysis; real-variable methods, orthogonality, and oscillatory \hfill\break
integrals}, Princeton Mathematics Series {\bf 43}, Princeton University Press, Princeton, NJ, 1993.

\noindent [V] A. N. Varchenko, {\it Newton polyhedra and estimates of oscillatory integrals}, Functional 
Anal. Appl. {\bf 18} (1976), no. 3, 175-196.

\noindent Department of Mathematics, Statistics, and Computer Science \hfill \break
\noindent University of Illinois at Chicago \hfill \break
\noindent 322 Science and Engineering Offices \hfill \break
\noindent 851 S. Morgan Street \hfill \break
\noindent Chicago, IL 60607-7045 \hfill \break
\noindent greenbla@uic.edu

\end{document}